\newcommand{\bdism}{\begin{displaymath}}
\newcommand{\edism}{\end{displaymath}}
\newcommand{\cc}{\mathbb{C}}
\newcommand{\zz}{\mathbb{Z}}
\newcommand{\pp}{\mathbb{P}}
\newtheorem{main}{Theorem}
\newtheorem{theorem}{Theorem}[section]
\newtheorem{proposition}[theorem]{Proposition}
\newtheorem{lemma}[theorem]{Lemma}
\newtheorem{fact}[theorem]{Fact}
\address{Department of Mathematics, Duke University, Durham NC 27708-0320,
USA} \email{luca@math.duke.edu}
\author{Luca Fabrizio Di Cerbo}
\title{On the classification of toroidal compactifications with $3\overline{c}_{2}=\overline{c}^{2}_{1}$ and $\overline{c}_{2}=1$}
\begin{document}
\pagestyle{headings}
\begin{abstract}

We classify the smallest finite volume complex hyperbolic surfaces
with cusps which admit smooth toroidal compactifications and which are not birational to a bi-elliptic surface.
Remarkably, there is only one such surface which appears to be the
compactification of a Picard modular surface.
\end{abstract}
\maketitle

\tableofcontents

\section{Introduction}
\pagenumbering{arabic}

A fundamental problem in the theory of complex surfaces is the
classification of all surfaces of general type with
$3c_{2}=c^{2}_{1}$ and smallest possible Euler number. Note that the
Euler number of a surface of general type is positive and at least
three. This nontrivial fact follows from the standard theory of
surfaces of general type and the Bogomolov-Miyaoka-Yau inequality,
for more details see for example Chapter VII in \cite{Bar}.
Moreover, by Yau's solution of Calabi conjecture \cite{Yau} any such
surface is a quotient of the unit ball in $\cc^{2}$ by a torsion
free co-compact discrete subgroup of $\textrm{PU}(2,1)$.

Thus, the problem of finding all surfaces of general type with
\begin{align}\label{smallest}
3c_{2}=c^{2}_{1}, \quad c_{2}=3,
\end{align}
reduces to the classical and longstanding problem in geometric
topology of finding all the compact complex hyperbolic surfaces with
the smallest possible volume.

The first example of a surface as in \ref{smallest} was given by
Mumford in \cite{Mum}. Interestingly, the surface constructed by
Mumford has vanishing first Betti number. Thus, it has the same
Hodge diamond of the two dimensional complex projective space and it
is then an example of a ``fake projective plane''. In the same paper
Mumford raises the problem of enumerating all fake projective
planes. This difficult problem was solved only recently by Prasad
and Yeung in \cite{Pra} with the addition of Cartwright and Steger
\cite{Ste}. Note that an important ingredient in \cite{Pra} and
\cite{Ste} is the fact that the fundamental groups
$\Gamma\in\textrm{PU}(2,1)$ of fake projective planes are
necessarily \emph{arithmetic}, see \cite{Kli} and \cite{Yeu}. Recall
that nonarithmetic lattices are known to exist by the work of Mostow
and Deligne-Mostow, see \cite{Deligne}. For a survey of this
fascinating problem the interested reader may refer to the Bourbaki
report \cite{Rem}.

The natural question arises whether fake projective planes are the
only surfaces of general type with Euler number equal to three.
Surprisingly, this is not the case! In fact, Cartwright and Steger
in \cite{Ste} found an example of a surface as in \ref{smallest}
with first Betti number equal to two. Nevertheless, at the time of
writing, there is evidence that fake projective planes and the
Cartwright-Steger surface exhaust all surfaces of general type with
Euler number three, see the paper of Yeung \cite{Yeung}. In
conclusion, the classification of the smallest compact complex
hyperbolic surfaces seems to be completed.

The main goal of this paper is to initiate an analogous program for
cusped complex hyperbolic surfaces. More precisely, we classify the
smallest finite volume complex hyperbolic surfaces with cusps which
admit \emph{smooth} toroidal compactifications which are not birational to a bi-elliptic surface. 
Remarkably, there is only one such surface and it is associated to an arithmetic lattice.
The unique finite volume complex
hyperbolic surfaces we find appears to be a surface first
constructed by Hirzebruch in \cite{Hirzebruch} and later shown to be
the compactification of a Picard modular surface, see
\cite{Holzapfel}, \cite{Stover}. Thus, the group theoretical
implications of our main classification result are obtained
implicitly.

An outline of the paper follows. Section \ref{prelim} starts with a
short review of the geometry of smooth toroidal compactifications.
The problem of finding the smallest toroidal compactifications is
formulated in terms of \emph{logarithmic} Chern numbers. More
precisely, the smallest finite volume complex hyperbolic surfaces
with cusps which admit smooth toroidal compactifications correspond
to surfaces of logarithmic general type such that
\begin{align}\notag
3\overline{c}_{2}=\overline{c}^{2}_{1}, \quad \overline{c}_{2}=1,
\end{align}
where  by $\overline{c}_{i}$, $i=1, 2$, we denote the logarithmic
Chern numbers of the compactification. Finally, we state the main
results of this paper.

In Section \ref{different}, we show that a toroidal compactification
with $\overline{c}_{2}=1$ cannot be of Kodaira dimension equal to
two, one or minus infinity. In light of the Kodaira-Enriques
classification, we reduce the problem to the study of the case with
Kodaira dimension equal to zero.

In Section \ref{zero}, we study in detail the case with zero Kodaira
dimension. It is shown that the minimal model of a toroidal
compactification with $\overline{c}_{2}=1$, if it is not a bi-elliptic surface, then it must be  must be the product of
two elliptic curves with large automorphism group. Finally, an
explicit example is constructed and its uniqueness is proved.
Interestingly, the surface constructed already appears in a paper by
Hirzebruch \cite{Hirzebruch}. 

\subsection{Preliminaries and main results}\label{prelim}

The theory of compactifications of locally symmetric varieties has
been extensively studied, see for example \cite{Borel2}. Let
$\mathcal{H}^{n}$ be the $n$-dimensional complex hyperbolic space.
Finite volume complex hyperbolic manifolds then correspond to
torsion free non-uniform lattices in $\textrm{PU}(n,1)$. Let then
$\Gamma$ be any such lattice in $\textrm{PU}(n,1)$. It is well known
that when the parabolic elements in $\Gamma$ have no rotational
part, then the manifold $\mathcal{H}^{n}/\Gamma$ has a particularly
nice compactification $(X, D)$ consisting of a smooth projective
variety $X$ and an exceptional divisor $D$. More precisely, the
divisor $D$ is the union of smooth disjoint Abelian varieties with
negative normal bundle. The pair $(X, D)$ is referred as
\emph{toroidal} compactification of $\mathcal{H}^{n}/\Gamma$. For
more details about this construction the interested reader may refer
to \cite{Ash} and \cite{Mok}. Note that in \cite{Mok} this
construction is carried out without any arithmeticity assumption on
$\Gamma$.

Let us now describe in more details the two dimensional case. Let
$\mathcal{H}^{2}/\Gamma$ be a complex hyperbolic surface with cusps
which admit a smooth toroidal compactification $(X, D)$. It is well
known, see \cite{Sakai} and Section 4 in \cite{Luca1}, that $(X, D)$
is $D$-minimal of log-general type. We say that the pair $(X, D)$ is
$D$-minimal if $X$ does not contain any exceptional curve $E$ of the
first kind such that $D\cdot E\leq 1$. Moreover, by the
Hirzebruch-Mumford proportionality \cite{Mumford}, we have
\begin{align}\notag
3\overline{c}_{2}=\overline{c}^{2}_{1},
\end{align}
where $\overline{c}_{1}$ and $\overline{c}_{2}$ are the logarithmic
Chern numbers of the pair $(X, D)$. For the standard properties of
logarithmic Chern classes we refer to \cite{Kaw}. Recall that the
$\overline{c}^{2}_{1}$ of the pair $(X, D)$ is equal to the
self-intersection of the log-canonical divisor $K_{X}+D$, while
$\overline{c}_{2}$ is simply the topological Euler characteristic of
$X\backslash D$. Since $D$ consists of smooth disjoint elliptic
curves, we have
\begin{align}\notag
\overline{c}_{2}(X)=\chi(X)-\chi(D)=\chi(X)=c_{2}(X).
\end{align}
By construction $X\backslash D$ is equipped with a complete metric
with pinched negative sectional curvature. For this class of metrics
it is well known that the Pfaffian of the curvature matrix is
pointwise strictly positive, see Section 2 in \cite{Luca1}. Thus,
Gromov-Harder's generalization of Gauss-Bonnet \cite{Gro1} implies
that $\overline{c}_{2}$ has to be a strictly positive integer.

It is then interesting to look for toroidal compactifications with
the smallest possible value for the second logarithmic Chern number,
i.e., $\overline{c}_{2}=1$. Note how any such manifold will also
provide an example of a cusped complex hyperbolic manifold with the
smallest possible volume. The main purpose of this paper is to
provide a complete classification of smooth toroidal
compactifications with $3\overline{c}_{2}=\overline{c}^{2}_{1}$ and
$\overline{c}_{2}=1$ which are not birational to a bi-elliptic surface.

\begin{main}\label{primo}
There exists a unique toroidal compactification with
$3\overline{c}_{2}=\overline{c}^{2}_{1}$ and $\overline{c}_{2}=1$ which not birational to a bi-elliptic surface.
\end{main}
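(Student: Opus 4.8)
The plan is to reduce, via the Enriques--Kodaira classification together with the results of Sections~\ref{different} and~\ref{zero}, to a blow-up of an abelian surface, to cut the boundary divisor down by a Chern-number computation, and then to identify the surface and the configuration by an arithmetic argument in the endomorphism ring of an elliptic curve. \emph{Reduction.} Let $(X,D)$ be such a compactification. By Section~\ref{different} its Kodaira dimension is zero, so the minimal model $X_{\min}$ is abelian, bi-elliptic, Enriques, or a K$3$ surface, with $c_{2}(X_{\min})$ equal to $0$, $0$, $12$, or $24$. Since $c_{2}$ increases by one under the blow-up of a point and $\overline{c}_{2}=c_{2}(X)=1$, the minimal model map $X\to X_{\min}$ is the blow-up of a single point and $c_{2}(X_{\min})=0$; excluding the bi-elliptic case leaves $X=\mathrm{Bl}_{p}A$ for an abelian surface $A$ with exceptional curve $E$. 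Note $X$ is then birational to $A$, hence not birational to a bi-elliptic surface, as claimed.

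\emph{The boundary divisor.} Because $A$ carries no rational curves, the only rational curve on $X$ is $E$, which is not elliptic, so each component of $D$ is the strict transform $D_{i}$ of an elliptic curve $\bar D_{i}\subset A$. Adjunction on $A$ gives $\bar D_{i}^{2}=0$, hence $D_{i}^{2}=-(\mathrm{mult}_{p}\bar D_{i})^{2}$, and negativity of the normal bundle of $D$ forces each $\bar D_{i}$ through $p$; since $\bar D_{i}$ is smooth this gives $D_{i}^{2}=-1$ and $D_{i}=\pi^{*}\bar D_{i}-E$, with $\pi\colon X\to A$ the blow-down. Disjointness of the $D_{i}$ then yields $\bar D_{i}\cdot\bar D_{j}=1$ for $i\neq j$. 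Writing $D=D_{1}+\dots+D_{n}$ and $B=\bar D_{1}+\dots+\bar D_{n}$, one computes $\pi^{*}B=D+nE$ and $B^{2}=n(n-1)$, so $K_{X}+D=\pi^{*}B-(n-1)E=D+E$ and $\overline{c}_1^2=(K_{X}+D)^{2}=n-1$; imposing $\overline{c}_1^2=3\overline{c}_{2}=3$ gives $n=4$.

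\emph{Identifying the surface.} After translating $p$ to the origin, the four curves are elliptic subgroups of $A$ meeting pairwise only at $0$, each pair with intersection number $1$. From $\bar D_{1}\cdot\bar D_{2}=1$ the addition map is an isomorphism $\bar D_{1}\times\bar D_{2}\xrightarrow{\sim}A$, so we may write $\bar D_{1}=E_{1}\times 0$, $\bar D_{2}=0\times E_{2}$; the conditions $\bar D_{3}\cdot\bar D_{1}=\bar D_{3}\cdot\bar D_{2}=1$ then force both projections $\bar D_{3}\to E_{1}$, $\bar D_{3}\to E_{2}$ to be isomorphisms, whence $E_{1}\cong E_{2}=:E$ and, after identification, $A=E\times E$ with $\bar D_{3}$ the diagonal and $\bar D_{4}=\Gamma_{\phi}$ the graph of some $\phi\in\aut(E)$, $\phi\neq 1$. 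The relation $\bar D_{3}\cdot\bar D_{4}=1$ reads $\deg(1-\phi)=1$, i.e.\ both $\phi$ and $1-\phi$ are units of $\operatorname{End}(E)$. Since $\operatorname{End}(E)$ is $\zz$ or an order in an imaginary quadratic field, and the only such ring in which $1$ is a sum of two units is $\zz[\omega]$ (via $\zeta_{6}+\zeta_{6}^{-1}=1$), it follows that $E=\cc/\zz[\omega]$ is the equianharmonic elliptic curve and $\phi\in\{\zeta_{6},\zeta_{6}^{-1}\}$, the two choices being exchanged by $(x,y)\mapsto(y,x)$. This determines $(X,D)$ up to isomorphism, establishing uniqueness.

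\emph{Existence, and the main obstacle.} For $X=\mathrm{Bl}_{0}(E\times E)$ with $E=\cc/\zz[\omega]$ and $D$ the union of the strict transforms of $E\times 0$, $0\times E$, the diagonal, and $\Gamma_{\zeta_{6}}$, one verifies directly that $D$ is a disjoint union of four smooth elliptic curves of self-intersection $-1$, that $K_{X}+D=D+E$ is nef (it is nonnegative on $E$, on each $D_{i}$, and on every other irreducible curve, all effective) and big, with $(K_{X}+D)^{2}=3>0$, that $(X,D)$ is $D$-minimal (the only rational curve is $E$, and $D\cdot E=4$), and that $\overline{c}_{2}=1$, $3\overline{c}_{2}=\overline{c}_1^2=3$. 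The equality case of the logarithmic Bogomolov--Miyaoka--Yau inequality, for pairs whose boundary is a disjoint union of smooth elliptic curves with negative normal bundle, then shows $X\setminus D$ is a finite-volume complex hyperbolic surface with unipotent cusps, so $(X,D)$ is its toroidal compactification; comparison with \cite{Hirzebruch}, \cite{Holzapfel}, \cite{Stover} identifies it with the compactified Picard modular surface. I expect the main obstacle to be the third step: once the bi-elliptic case is discarded, the delicate part is to normalize the four-curve configuration modulo $\aut(A)$ and to extract from the last intersection equality the fact that $\operatorname{End}(E)$ must have extra units — equivalently that the equianharmonic curve is forced.
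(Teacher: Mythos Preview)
Your proof is correct and, in two places, cleaner than the paper's.

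First, in the passage from $X$ to the abelian surface you assert directly that each $\bar D_{i}$ is a smooth elliptic curve. This is justified (implicitly) by the standard fact that a nonconstant morphism from an elliptic curve to an abelian variety is, after translation, a group homomorphism, so its image is a translate of an abelian subvariety and hence smooth; since $D_{i}\to\bar D_{i}$ is the normalization, $\bar D_{i}$ is smooth. From this you get $\bar D_{i}^{2}=0$, $D_{i}^{2}=-1$, and $n=4$ in a few lines. The paper instead enumerates all a priori cusp configurations with $-\sum D_{i}^{2}=4$, allows the blown-down curves $C_{i}$ to be singular with a single ordinary point of multiplicity $r$, and then rules out the singular cases by a Gauss-map/theta-function argument on the abelian surface yielding the inequality $C_{i}^{2}\geq r(r-1)+1$ (Lemma~\ref{abelian} and the discussion preceding it). Your observation makes that entire analysis unnecessary once one is on an abelian surface.

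Second, in pinning down the elliptic curve you normalize $\bar D_{3}$ to the diagonal and reduce $\bar D_{3}\cdot\bar D_{4}=1$ to $\deg(1-\phi)=1$, i.e.\ to the unit equation $\phi+(1-\phi)=1$ in $\operatorname{End}(E)$; the classification of unit groups of imaginary quadratic orders then forces $\operatorname{End}(E)=\zz[\omega]$ and $\phi\in\{\zeta_{6},\zeta_{6}^{-1}\}$. The paper reaches the same conclusion by running through the finitely many $\alpha$ with $\alpha\Lambda=\Lambda$ for each of the two special lattices and checking intersections by hand (Facts~\ref{fatto3}, \ref{fatto4} and the case analysis after them). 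Both routes land on Lemma~\ref{final}; yours is more conceptual, the paper's more explicit. The existence step, via the equality case of the logarithmic Bogomolov--Miyaoka--Yau inequality \cite{TianY} or via \cite{Holzapfel}, \cite{Stover}, is the same in both.
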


It is interesting to notice that because of the logarithmic
Bogomolov-Miyaoka-Yau inequality proved in \cite{TianY}, Theorem
\ref{primo} provides the classification of logarithmic pairs $(X,
D)$ of log-general type with
$3\overline{c}_{2}=\overline{c}^{2}_{1}$ and $\overline{c}_{2}=1$ for which $X$ is not birational to a bi-elliptic surface.

Here is an interesting corollary.

\begin{main}\label{secondo}
The lattice associated to a toroidal compactification with
$3\overline{c}_{2}=\overline{c}^{2}_{1}$ and $\overline{c}_{2}=1$ which is not birational to a bi-elliptic surfaces is
arithmetic.
\end{main}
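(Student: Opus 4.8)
The plan is to derive Theorem \ref{secondo} as a formal consequence of Theorem \ref{primo} together with the explicit construction carried out in Section \ref{zero}. Since Theorem \ref{primo} asserts that there is \emph{exactly one} toroidal compactification $(X,D)$ with $3\overline{c}_{2}=\overline{c}^{2}_{1}$ and $\overline{c}_{2}=1$ that is not birational to a bi-elliptic surface, it is enough to produce a single non-uniform \emph{arithmetic} lattice $\Gamma\subset\textrm{PU}(2,1)$ whose smooth toroidal compactification realizes these logarithmic Chern numbers. Uniqueness then forces the lattice attached to the surface in the statement to be conjugate to this $\Gamma$, and arithmeticity is a conjugacy (indeed commensurability) invariant, so the conclusion follows.

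First I would recall from Section \ref{zero} that the minimal model of $(X,D)$ is a product of two elliptic curves with large automorphism group and that the explicit surface built there is precisely the ball quotient that already appears in Hirzebruch's paper \cite{Hirzebruch}. Next I would invoke the identification, due to Holzapfel \cite{Holzapfel} and Stover \cite{Stover}, of $X\setminus D=\mathcal{H}^{2}/\Gamma$ with a Picard modular surface, i.e. $\Gamma$ is commensurable with (a congruence subgroup of) the Picard modular group $\textrm{PU}(2,1;\oo_{K})$ attached to a Hermitian form of signature $(2,1)$ over an imaginary quadratic field $K$. Such Picard modular groups are arithmetic lattices in $\textrm{PU}(2,1)$ by construction, hence $\Gamma$ is arithmetic; combined with the previous paragraph this yields Theorem \ref{secondo}. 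I would also note that the passage from ``the manifold $X\setminus D$'' to ``the lattice $\Gamma$'' is legitimate because a finite volume complex hyperbolic surface determines its lattice up to conjugacy (Mostow--Prasad rigidity), so ``the lattice associated to a toroidal compactification'' is well defined.

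The main obstacle is the middle step: matching the abstract surface isolated by the numerical classification with the concrete Picard modular surface recorded in the literature. Concretely, one must check that the boundary configuration (a single elliptic curve $D$ of the expected self-intersection), the elliptic fibration structure inherited from the product minimal model, and the numerical invariants $p_{g}$, $q$, $K_{X}^{2}$ agree with those computed by Hirzebruch and Holzapfel for their example; once the biholomorphism type of $X\setminus D$ is pinned down, the group-theoretic statement is automatic. No genuinely new arithmetic argument is required beyond these citations — the content of Theorem \ref{secondo} is exactly that the constraints $3\overline{c}_{2}=\overline{c}^{2}_{1}$ and $\overline{c}_{2}=1$ are rigid enough to single out an arithmetically defined surface, so the group-theoretical implications of the classification are obtained implicitly.
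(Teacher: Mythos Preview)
Your argument is essentially the paper's own: invoke Theorem \ref{primo} for uniqueness, identify the unique surface explicitly via the analysis of Section \ref{zero} (Lemma \ref{final}), and then cite \cite{Holzapfel} and \cite{Stover} to recognize it as a Picard modular surface, hence arithmetic. The appeal to Mostow--Prasad rigidity to make ``the lattice associated to $(X,D)$'' well defined is a reasonable gloss that the paper leaves implicit.

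One factual slip to correct: the compactifying divisor $D$ is \emph{not} a single elliptic curve but four disjoint smooth elliptic curves with $D_i^2=-1$ (this is the content of Lemma \ref{abelian}), so the surface has four cusps. Relatedly, your ``main obstacle'' paragraph overstates what needs checking: Lemma \ref{final} already pins down $(Y,C)$, and hence $(X,D)$, up to isomorphism as an explicit configuration on $\cc^2/(\Lambda_{(1,\tau)}\times\Lambda_{(1,\tau)})$, so no separate matching of $p_g$, $q$, $K_X^2$ or fibration data is required---the identification with Hirzebruch's example is immediate from the construction, and the arithmeticity then comes straight from the cited references.
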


In other words, any torsion free lattice $\Gamma\in
\textrm{PU}(2,1)$ whose parabolic elements have no rotational part
and with the smallest possible Euler characteristic must be
arithmetic if the associated smooth compactification is not birational to a bi-elliptic surface. The proof of Theorem \ref{secondo} is indirect. In fact,
it relies on the fact that the toroidal compactification identified
in Theorem \ref{primo} was already known to be associated to an
arithmetic lattice, see \cite{Holzapfel} and \cite{Stover}.
Nevertheless, the arithmeticity result stated in Theorem
\ref{secondo} seems not to be known.\\

For what regards the bi-elliptic case the classification is in progress. I can currently construct an example, moreover Matthew Stover has informed me that he can construct a different example. The classification of these bi-elliptic examples will be included in the next version of this work.

\vspace{0.5cm}

\noindent\textbf{Acknowledgements}. I would like to thank Professor
Mark Stern for his constant support and helpful discussions. Special
thanks go to Gabriele Di Cerbo for countless discussions and
encouragement. I also acknowledge useful discussions with Matthew
Stover. Finally, I would like to thank the organizers of the
conferences ``Algebraic \& Hyperbolic Geometry - New Connections''
and ``Geometria Algebrica nella Capitale'' where part of this work
was carried out.

\section{The case $\kappa\neq$ 0}\label{different}

In this section we show that a toroidal compactification with
$\overline{c}_{2}=1$ must have Kodaira dimension equal to zero. Let
us start by showing that $X$ cannot be of general type.

\begin{lemma}
Let $(X, D)$ be a toroidal compactification with
$\overline{c}_{2}=1$. Then $X$ cannot have $\kappa(X)=2$.
\end{lemma}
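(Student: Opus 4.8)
The plan is to play the hypothesis $\overline{c}_{2}=1$ directly against the classical lower bound on the Euler number of a surface of general type. First I would recall, exactly as in the preliminaries, that since $D$ is a disjoint union of smooth elliptic curves one has $\chi(D)=0$, hence $\overline{c}_{2}=\chi(X)-\chi(D)=\chi(X)=c_{2}(X)$; thus the hypothesis $\overline{c}_{2}=1$ says precisely that $c_{2}(X)=1$.

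Now suppose, for contradiction, that $\kappa(X)=2$, i.e. that $X$ is of general type. As recalled in the introduction, the Euler number of any surface of general type is positive and at least three. Concretely, one passes to the minimal model $X_{0}$ via a composition of $r\geq 0$ point blow-ups, each of which raises the topological Euler characteristic by one, so that $c_{2}(X)=c_{2}(X_{0})+r\geq c_{2}(X_{0})$; and for the minimal surface of general type $X_{0}$ one has $c_{2}(X_{0})\geq 3$, which follows from Noether's formula $c^{2}_{1}(X_{0})+c_{2}(X_{0})=12\chi(\oo_{X_{0}})$ together with $\chi(\oo_{X_{0}})\geq 1$ and the Bogomolov--Miyaoka--Yau inequality $c^{2}_{1}(X_{0})\leq 3c_{2}(X_{0})$. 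Hence $1=\overline{c}_{2}=c_{2}(X)\geq 3$, which is absurd, and therefore $\kappa(X)\neq 2$.

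I do not anticipate a genuine difficulty here: the statement is an immediate consequence of the identification $\overline{c}_{2}=c_{2}(X)$ and a standard fact about surfaces of general type. The only point that merits a moment's care is that $X$ is not assumed minimal, so one should either quote the Euler-number bound in the form valid for all surfaces of general type, or else explicitly pass through $X_{0}$ and use that $c_{2}$ does not decrease under blow-up. None of the finer structure of the pair --- its $D$-minimality, or the exact value $\overline{c}^{2}_{1}=3$ forced by $\overline{c}_{2}=1$ --- is needed at this stage; those will instead be the working tools when the subsequent lemmas rule out $\kappa(X)=1$ and $\kappa(X)=-\infty$.
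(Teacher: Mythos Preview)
Your argument is correct, but it follows a genuinely different route from the paper's own proof. The paper does \emph{not} invoke the bound $c_{2}(X_{0})\geq 3$ directly. Instead, it first uses only the positivity $c_{2}(X_{0})>0$ to conclude that $X$ is already minimal; then it brings in the logarithmic equality $\overline{c}^{2}_{1}=3\overline{c}_{2}$ together with $D^{2}<0$ and $c^{2}_{1}(X)>0$ to deduce $0<c^{2}_{1}(X)<3c_{2}(X)=3$, so $c^{2}_{1}(X)\in\{1,2\}$; and finally Noether's formula modulo $12$ gives the contradiction, since $c^{2}_{1}+c_{2}\in\{2,3\}$.

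What this buys each side: your approach is more self-contained in the sense that it never uses the pair structure beyond the identification $\overline{c}_{2}=c_{2}(X)$, and in particular it does not need $D^{2}<0$ or the Hirzebruch--Mumford proportionality $3\overline{c}_{2}=\overline{c}^{2}_{1}$ at all --- you are simply quoting the global fact $c_{2}\geq 3$ for general type, which the introduction already states. The paper's route, on the other hand, avoids invoking the Bogomolov--Miyaoka--Yau inequality for $X_{0}$ and instead exploits the specific logarithmic Chern-number hypothesis that is already part of the setup. Both are short and valid; yours is arguably the more direct way to see it.
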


\begin{proof}

Recall that, given a surface $Y$ and letting $Bl_{k}(Y)$ be its blow
up at $k$ points, the second Chern number of $Bl_{k}(Y)$ is given by
\begin{align}\notag
c_{2}(Bl_{k}(Y))=k+c_{2}(Y).
\end{align}
Now, it is well known that the Euler characteristic of a minimal
surface of general type is strictly positive, see Chapter VII
\cite{Bar}. Since $c_{2}(X)=1$, we conclude that $X$ must be
minimal. Next, let us observe that
\begin{align}\notag
\overline{c}^{2}_{1}=c^{2}_{1}-D^{2}=3\overline{c}_{2}=3c_{2}
\end{align}
so that
\begin{align}\notag
0<c^{2}_{1}<3c_{2}
\end{align}
since $D^{2}<0$ and $c^{2}_{1}>0$ for any minimal surface of general
type. We then have $c^{2}_{1}\in\{1, 2\}$. But now for any complex
surface we must have
\begin{align}\notag
c^{2}_{1}+c_{2}=0\mod(12)
\end{align}
by Noether's formula, see page 9 in \cite{Friedman}. We therefore
conclude that $(X, D)$ cannot be such that $X$ is of general type.
\end{proof}

Let us proceed by ruling out the case of Kodaira dimension one.

\begin{lemma}
Let $(X, D)$ be a toroidal compactification with
$\overline{c}_{2}=1$. Then $X$ cannot have $\kappa(X)=1$.
\end{lemma}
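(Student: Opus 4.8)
The plan is to run the same numerical reduction as in the previous lemma and then exploit the structure of properly elliptic surfaces together with $D$-minimality.

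\textbf{Numerical reduction.} Since $D$ is a disjoint union of elliptic curves, adjunction gives $D\cdot(K_X+D)=\sum_i(2g(D_i)-2)=0$, whence $\overline{c}^{2}_{1}=(K_X+D)^2=K_X^2-D^2$ and $K_X\cdot D=-D^2$. From $\overline{c}^{2}_{1}=3\overline{c}_{2}=3$ and $D^2<0$ we get $K_X^2=3+D^2<3$. As $\kappa(X)=1\ge 0$, the minimal model of $X$ has $\chi(\mathcal{O})\ge 0$, so $\chi(\mathcal{O}_X)\ge 0$, and Noether's formula gives $K_X^2=12\chi(\mathcal{O}_X)-c_2(X)=12\chi(\mathcal{O}_X)-1$. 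The only integer in $[-1,3)$ congruent to $-1$ modulo $12$ is $-1$, so $K_X^2=-1$, $\chi(\mathcal{O}_X)=0$, $D^2=-4$ and $K_X\cdot D=4$ (in particular $D\ne\emptyset$). Since $K^2\ge 0$ for a minimal surface of non-negative Kodaira dimension, $X$ is not minimal; its minimal model $X'$ is properly elliptic with $K_{X'}^2=0$, so exactly one $(-1)$-curve $\mathcal{E}$ is contracted, $X=\mathrm{Bl}_pX'$, and $c_2(X')=c_2(X)-1=0$. Hence the elliptic fibration $f\colon X'\to B$ has no singular fibers (only, possibly, multiple fibers with smooth reduction), and $K_{X'}$ is numerically equivalent to $\lambda F$ for a general fiber $F$ and some rational $\lambda>0$.

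\textbf{Locating the boundary.} Set $g=f\circ\pi\colon X\to B$ and $D'=\pi_*D$. Since $(X,D)$ is $D$-minimal, $D\cdot\mathcal{E}\ge 2$. A $g$-vertical irreducible curve in $X$ is either the pull-back of a reduced fiber of $f$ (self-intersection $0$) or, if it contains $p$, the strict transform of the unique fiber of $f$ through $p$ (a smooth elliptic curve of self-intersection $-1$ meeting $\mathcal{E}$ once); as the components of $D$ have negative self-intersection, $D$ has at most one vertical component, and such a component contributes exactly $1$ to $-D^2$ and exactly $1$ to $D\cdot\mathcal{E}$. Because $-D^2=4$, it follows that $D$ has a horizontal component $D_i$; then $D_i\to B$ is a surjective morphism from a smooth genus-$1$ curve, so Riemann--Hurwitz forces $g(B)\le 1$. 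Writing $m=D\cdot\mathcal{E}=\sum_i\mathrm{mult}_p(D_i')\ge 2$ and $\delta=F\cdot D'$ (the total degree over $B$ of the horizontal part of $D$, so $\delta\ge 1$), the identity $K_X\cdot D=4$ becomes $\lambda\delta+m=4$; combined with $m\ge 2$ and $\delta\ge 1$ this gives $\lambda\le 2$, and since $\chi(\mathcal{O}_{X'})=0$ forces $K_{X'}=\sum_j(m_j-1)F_j$ with $\lambda=\deg K_B+\sum_j(1-1/m_j)$, it restricts $g(B)$ and the multiplicities $m_j$ to a short list.

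\textbf{The remaining cases, and the main obstacle.} What is left, and where I expect the real work to be, is excluding the finitely many numerically consistent configurations with $g(B)\in\{0,1\}$. For each horizontal component, adjunction on $X$ pushed down through $\pi$ gives $D_i'\cdot(K_{X'}+D_i')=\mathrm{mult}_p(D_i')\bigl(\mathrm{mult}_p(D_i')-1\bigr)$, hence $D_i^2=-\bigl(\mathrm{mult}_p(D_i')+\lambda\cdot\deg(D_i'/B)\bigr)$; integrality of $D_i^2$ together with $\lambda\delta+m=4$, $\sum_iD_i^2=-4$ and $\sum_i\mathrm{mult}_p(D_i')=m\ge 2$ leaves only a few possibilities for the number of horizontal components, their degrees over $B$, and the $m_j$. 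Each such possibility must then be ruled out using the disjointness of the $D_i$ in $X$ --- which translates into $D_i'\cdot D_j'=\mathrm{mult}_p(D_i')\,\mathrm{mult}_p(D_j')$ with the entire intersection supported at $p$ --- together with the explicit description of $X'$ as an isotrivial elliptic surface with $\chi(\mathcal{O})=0$ and only smooth fibers over a curve of genus at most one. Carrying out this last bookkeeping of multiple fibers and of the contact orders at the blown-up point $p$ is the hard part; everything preceding it is forced.
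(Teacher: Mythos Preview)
Your numerical reduction is correct and agrees with the paper: $X$ is a one-point blow-up of a minimal properly elliptic surface $X'\to B$ with $c_2(X')=0$, so every fibre is smooth (possibly multiple), and some component of $D$ is horizontal, forcing $g(B)\le 1$. But you then stop: you explicitly label the elimination of the remaining multiple-fibre configurations ``the hard part'' and do not carry it out. That is a genuine gap --- what you have is a reduction, not a proof. It is also not clear from your outline that the promised list is actually short: the constraint $\lambda\delta+m=4$ with $m\ge 2$ does pin down $\lambda\delta\in\{1,2\}$, but for a given $\lambda$ many tuples $(m_j)$ of fibre multiplicities can occur, and you give no mechanism (beyond ``bookkeeping'') to control them or to exploit the condition that all the $D_i'\cdot D_j'$ are concentrated at $p$.

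The paper sidesteps this case analysis entirely with a base-change trick. A horizontal component of $D$ gives an $n$-section $\Sigma\subset X'$ whose normalization $C'$ is a smooth elliptic curve. Pulling back the fibration along $C'\to B$ (and normalizing) produces an elliptic surface $Y'\to C'$ that now carries a holomorphic section and hence has no multiple fibres. Since $\deg L=\chi(\mathcal{O}_{X'})=0$ (so $L$ is trivial when $g(B)=0$, and in either case the canonical bundle formula leaves nothing positive), one gets $\kappa(Y')\le 0$; but $Y'\to X'$ is generically finite, so $\kappa(Y')\ge\kappa(X')=1$, a contradiction. This single covering argument replaces the entire case analysis you were proposing.
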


\begin{proof}

Let us look for a moment at the minimal models of surfaces with
Kodaira dimension one, for details see Chapter VI in \cite{Bar}.
Thus given $X$, observe that there exists a unique minimal model $Y$
such that $c^{2}_{1}(Y)=0$ and $c_{2}(Y)\geq 0$. Now by Noether's
formula we have
\begin{align}\notag
c_{2}(Y)=12d
\end{align}
where $d\in\zz_{\geq0}$. Since the Chern number $c^{2}_{1}+c_{2}$ is
a birational invariant, we conclude that a surface with
$\kappa(X)=1$ must satisfy
\begin{align}\notag
c_{2}(X)=12d+k
\end{align}
with $d, k\in\zz_{\geq0}$. Therefore, if we want $c_{2}(X)=1$, we
must have $d=0$ and $k=1$. In other words, $X$ is the blow up at
just one point of a minimal elliptic surface $Y$ with zero Euler
number. For a minimal elliptic fibration
\begin{align}\notag
\pi: Y\longrightarrow E
\end{align}
with multiple fibers $F_{1}, ..., F_{k}$ of multiplicities $m_{1},
..., m_{k}$ we have
\begin{align}\notag
K_{Y}=\pi^{*}(K_{E}\otimes
L)\otimes\mathcal{O}_{Y}(\sum^{k}_{i=1}(m_{1}-1)F_{i})
\end{align}
where $L=(R^{1}\pi_{*}\mathcal{O}_{Y})^{-1}$, $d=deg(L)$ and
$c_{2}(Y)=12d$. In the case under consideration, we have $d=0$ and
then all the singular fibers of the elliptic fibration are multiple
fibers with smooth reduction, see Corollary 17 page 177 in
\cite{Friedman}. Consider now
\begin{align}\notag
f: X\longrightarrow E
\end{align}
where $f=\pi\circ Bl$ and $Bl: X\longrightarrow Y$ is the blow up
map. Let $D_{i}$ be an irreducible component of $D$. We then cannot
have $f(D_{i})=p_{i}$ for all $i$. If otherwise, there would exists
a smooth elliptic curve in $X\backslash D$. We conclude that the
image of at least one of the $D_{i}$'s under the elliptic fibration
is $E$. By the Hurwitz formula, the genus of $E$ must be $0$ or $1$.
Thus, if we want $\kappa(Y)=1$, we must assume the existence of
multiple fibers. Now, denote by $(Y, C)$ the blow down configuration
of $(X, D)$. Let us study the case $g(E)=1$ first. We then have that
an irreducible component of $C$, say $\Sigma$, is a holomorphic
$n$-section of the elliptic fibration. Moreover, $\Sigma$ is
normalized by a smooth elliptic curve, say $C'$, which is an
irreducible component of $D$. Let us denote by $Y'$ the fiber
product $Y\times_{E} C'$. Thus, $Y'\longrightarrow Y$ is a
$n$-covering map. Since $Y'$ has a holomorphic $1$-section, it
cannot have multiple fibers. We then have $\kappa(Y)=0$.

Let us now assume $g(E)=0$. In this case $L$ is trivial since
$deg(L)=0$. Again, there is a holomorphic $n$-section $\Sigma$ which
is normalized by a smooth elliptic curve $C'$. Following
\cite{Friedman} page 193, there is a finite cover $\pi':
Y'\longrightarrow C'$ of $\pi:Y\longrightarrow E$ with a
holomorphic $1$-section and such that $L'=\mathcal{O}_{C'}$.
We then have that $Y'=C'\times F$ for some elliptic curve $F$.
Because of Theorem 7.4 page 29 in \cite{Bar}, the Kodaira dimension
of $Y$ cannot be one.
\end{proof}

Let us conclude by showing that $X$ cannot be birational to a
rational or ruled surface.

\begin{lemma}
Let $(X, D)$ be a toroidal compactification with
$\overline{c}_{2}=1$. Then $X$ cannot have $\kappa(X)=-\infty$.
\end{lemma}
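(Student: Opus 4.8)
The plan is to argue by contradiction, assuming $\kappa(X)=-\infty$, so that $X$ is birational to a rational or ruled surface. The starting point is the same numerical observation used in the previous two lemmas: from $\overline{c}^{2}_{1}=c^{2}_{1}(X)-D^{2}=3\overline{c}_{2}=3$ and $D^{2}<0$ we get $c^{2}_{1}(X)<3$, while $c_{2}(X)=1$. Noether's formula $c^{2}_{1}+c_{2}\equiv 0 \pmod{12}$ then forces $c^{2}_{1}(X)+1\equiv 0\pmod{12}$, hence $c^{2}_{1}(X)\in\{-1,11,23,\dots\}$ after combining with $c_1^2<3$; together with the birational-invariance of $c_1^2+c_2$ and the fact that $\chi(\mathcal{O}_X)=1$ for a rational surface (resp. $\chi(\mathcal{O}_X)=1-g$ for a ruled surface over a curve of genus $g$), one gets strong constraints. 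In fact $c_2(X)=1$ already pins down the birational type: a rational surface has $c_2\geq 3$ for $\mathbb{P}^2$ and $c_2=4$ for $\mathbb{P}^1\times\mathbb{P}^1$ or a Hirzebruch surface, and blowing up only increases $c_2$, so $c_2(X)=1$ is impossible for a rational surface; for a ruled surface over a curve $B$ of genus $g$ one has $c_2=4(1-g)$ on a minimal model, increasing under blow-ups, so $c_2(X)=1$ forces $g\geq 1$ and in fact $c_2(X)=4(1-g)+k$ with $k\geq 0$, which has no solution with value $1$ unless $g=1,k=-3$ — impossible. So the purely numerical argument should already close the rational/ruled case, exactly in the style of the $\kappa=2$ lemma.

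The key steps, in order, are: (i) record $c_2(X)=1$ and $0<c_1^2(X)<3$ would-be bound — but note that unlike the general type case we cannot assume $c_1^2>0$, so instead I would directly use $c_2(X)=1$ together with the structure of minimal models with $\kappa=-\infty$; (ii) recall that for $\mathbb{P}^2$, $c_2=3$, and for every other minimal rational or ruled surface $c_2=4(1-g)$ where $g$ is the genus of the base (so $c_2\in\{\dots,-4,0,4,4,\dots\}$, in particular $c_2\le 4$ and $c_2\equiv 0\pmod 4$ when $g=0$); (iii) use $c_2(\mathrm{Bl}_k(Y))=k+c_2(Y)$ to see $c_2(X)=k+c_2(Y_{\min})$ with $k\ge 0$; (iv) check that no choice of minimal model $Y_{\min}$ and $k\ge 0$ yields $c_2(X)=1$: from $\mathbb{P}^2$ we only reach $3,4,5,\dots$; from $g=0$ ruled we reach $4,5,6,\dots$; from $g=1$ ruled we reach $0,1,2,\dots$ — so the only arithmetic possibility is a surface ruled over an elliptic curve blown up at one point; (v) rule out this last possibility geometrically, the way the $\kappa=1$ lemma handles its borderline case. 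For (v) I would observe that if $X$ is the blow-up at one point of a geometrically ruled surface $\pi\colon Y\to B$ with $B$ elliptic, then consider the composed map $f=\pi\circ Bl\colon X\to B$; since $X\setminus D$ contains no compact curve of genus $\le 1$ (a smooth rational or elliptic curve in $X\setminus D$ would pull back the complete hyperbolic metric to give an impossible nonpositively curved metric, exactly as used before), no component $D_i$ of $D$ can map to a point, so some $D_i$ surjects onto $B$; but a general fiber of $f$ is a smooth rational curve meeting $D$, and one derives a contradiction from $D$-minimality together with the negativity of the normal bundles of the $D_i$'s — more precisely, a fiber $F\cong\mathbb{P}^1$ of the ruling, not passing through the blown-up point, lies in $X$ with $F^2=0$, and $F\cdot D\ge 1$ forces $F\setminus D$ to be an affine curve of log-general type, but $K_X\cdot F<0$ contradicts $(K_X+D)$ being log-general type / nef-ish on such fibers; this needs a clean argument that $K_X+D$ cannot be big if $X$ carries a $\mathbb{P}^1$-fibration with controlled intersection with $D$.

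The main obstacle I anticipate is step (v): closing the elliptically-ruled-blown-up-once case cleanly. The numerics genuinely leave this one case open, so unlike the $\kappa=2$ lemma one cannot finish with Noether's formula alone. The cleanest route is probably to use logarithmic general type directly: if $X$ admits a $\mathbb{P}^1$-fibration $f\colon X\to B$, then for a general fiber $F$ we have $F\cong\mathbb{P}^1$, $F^2=0$, $K_X\cdot F=-2$, and so $(K_X+D)\cdot F=-2+D\cdot F$; for $(X,D)$ to be of log-general type we need $(K_X+D)\cdot F\ge 0$ hence $D\cdot F\ge 2$. Now one counts: each component $D_i$ that dominates $B$ contributes at least $\deg(D_i\to B)\ge 1$ to $D\cdot F$, and since $D$ is a disjoint union of smooth \emph{elliptic} curves with negative normal bundle, one shows a smooth elliptic $D_i$ cannot be a multisection of degree $\ge 2$ of a $\mathbb{P}^1$-bundle over an elliptic curve without forcing $D_i^2\ge 0$ (a section or multisection of a ruled surface over an elliptic base has non-negative self-intersection unless the ruled surface is the unstable one, and even then the contradiction can be extracted) — this is where the real work lies. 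Alternatively, and perhaps more robustly, I would invoke the classification of surfaces carrying both a complete finite-volume complex-hyperbolic metric on $X\setminus D$ and a $\mathbb{P}^1$-fibration: the fibration would exhibit rational curves in $X\setminus D$ once one rules out every fiber meeting $D$ in $\ge 2$ points consistently, contradicting Kobayashi hyperbolicity of $\mathcal{H}^2/\Gamma$. I would present step (v) via the log-general-type intersection inequality on fibers as the primary argument, falling back on the hyperbolicity obstruction as a cross-check.
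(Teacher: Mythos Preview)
Your numerical reduction (steps (i)--(iv)) is correct and coincides with the paper's: $c_2(X)=1$ rules out $\mathbb{P}^2$ and the Hirzebruch surfaces, and forces the minimal model to be ruled over an elliptic curve with exactly one point blown up. (Your line ``$g=1,\,k=-3$ --- impossible'' is a slip; you immediately correct it to $g=1,\,k=1$ a few lines later.)

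The genuine gap is in step (v). Your primary argument runs: $(K_X+D)\cdot F\geq 0$ gives $D\cdot F\geq 2$, and then you want to say a smooth elliptic component $D_i$ cannot be a multisection of degree $\geq 2$ without $D_i^2\geq 0$. This does not close the case, for two reasons. First, the inequality $D\cdot F\geq 2$ is perfectly compatible with $D$ consisting of several \emph{sections} (degree~$1$ each), and you never address that possibility. Second, the asserted lower bound on the self-intersection of an elliptic multisection is not established, and in any case after a single blow-up the proper transform of a smooth elliptic bisection through the blown-up point on $B\times\mathbb{P}^1$ already has self-intersection $-1$, so there is no obstruction of the kind you suggest. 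Your hyperbolicity fallback is likewise inconclusive as stated: it only reproves $D\cdot F\geq 3$ for a general fiber (a $\mathbb{P}^1$ minus $\leq 2$ points is not hyperbolic), which again is compatible with three or four disjoint elliptic sections.

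The paper closes (v) by a different, concrete mechanism. First it bounds the number of cusps by the Picard rank of $X$ (which is~$3$), so $D$ has at most two components. Then it looks not at a general fiber but at the \emph{special} one: let $(Y,C)$ be the blow-down pair, $p$ the blown-up point (necessarily the unique singular point of $C$), and $F$ the fiber of the ruling through~$p$. Comparing the tangent direction of $F$ at $p$ with the tangent directions of the branches of $C$ at~$p$, one sees that the proper transform $\tilde F\cong\mathbb{P}^1$ lies in $X\setminus D$ with at most one puncture, contradicting hyperbolicity of $X\setminus D$. This special-fiber analysis is exactly what your proposal is missing; your log-nef inequality on the generic fiber does not see it.
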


\begin{proof}

Recall that the minimal models of surfaces with negative Kodaira
dimension are $\pp_{2}$, the Hirzebruch surfaces $X_{e}$ and ruled
surfaces over Riemann surfaces of genus $g\geq 1$, see Chapter VI in
\cite{Bar}. Since
\begin{align}\notag
c_{2}(\pp_{2})=3,\quad c_{2}(X_{e})=4,
\end{align}
a toroidal compactification with $\overline{c}_{2}=1$ cannot have
these surfaces as minimal model. Moreover, since a negative elliptic
curve must occur as a $n$-section of the ruling, the Hurwitz formula
shows that $X$ has to be the blow up of a surface $Y$ ruled over an
elliptic curve. In this case $c_{2}(Y)=0$. Thus, $X$ is the blow up
of $Y$ at a single point. Since the rank of the Picard group of $X$
is three, by Proposition 3.8 in \cite{DiCerbo2} we have that $X$ can
at most have two cusps. Let us denote by $(Y, C)$ the blow down
configuration of $(X, D)$ and let us assume $C$ consists of one
irreducible components only. We then have that $C$ must be a
$n$-section of the ruling of $Y$. It is easily seen that $C$ cannot
be a $1$-section of the ruling. If otherwise, $X\backslash D$
contains a $\pp_{1}$ with just one puncture. For the same reason,
$C$ must be singular at a single point say $p$. Let $F$ be the fiber
of the ruling of $Y$ passing through the point $p$. Now, assume that
the tangent line of $F$ at $p$ does not coincide with any of the
tangent lines of $C$ at $p$. Blowing up the point $p$, we obtain
that the proper transform $\tilde{F}$ of $F$ in entirely contained
in $X\backslash D$. This is clearly impossible as
$\tilde{F}\simeq\pp_{1}$.

We proceed in a similar way if the tangent line of $F$ at $p$
coincides with one of the tangent lines of $C$ at $p$. In this case,
$\tilde{F}$ with one puncture is entirely contained in $X\backslash
D$. This is again impossible.

Let us conclude by studying the case when $(X, D)$ has two cusps. In
this situation $(Y, C)$ is such that $C$ consists of two irreducible
components, say $C_{1}$ and $C_{2}$, intersecting in a point $p$.
The irreducible components of $C$ might be singular at the point $p$
only, having an ordinary singular point there. Moreover, the
tangents lines of $C_{1}$ and $C_{2}$ at $p$ must be all distinct.
We can then proceed as in the one cusp case to get a contradiction.
\end{proof}

Let us summarize the results of this section into a proposition.

\begin{proposition}\label{reduction}
Let $(X, D)$ be a toroidal compactification with
$\overline{c}_{2}=1$. Then the Kodaira dimension of $X$ is zero.
\end{proposition}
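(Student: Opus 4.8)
The plan is simply to combine the three lemmas just proved with the Kodaira--Enriques classification of smooth projective surfaces. Since $X$ is a smooth projective surface --- this is part of the definition of a smooth toroidal compactification, as recalled in Section \ref{prelim} --- its Kodaira dimension $\kappa(X)$ belongs to $\{-\infty, 0, 1, 2\}$, so it suffices to discard the three values $-\infty$, $1$, $2$.

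First I would note that the first lemma of this section eliminates $\kappa(X) = 2$: the constraint $\overline{c}_2 = 1$ forces $c_2(X) = 1$, hence $X$ minimal, and then $0 < c_1^2 < 3c_2 = 3$ together with Noether's divisibility $c_1^2 + c_2 \equiv 0 \pmod{12}$ yields a contradiction. Next, the second lemma eliminates $\kappa(X) = 1$ by forcing $X$ to be the blow-up at one point of a minimal elliptic surface with zero Euler number, all of whose singular fibers are smooth multiple fibers, and then using a fiber-product covering argument over the elliptic curve normalizing an $n$-section to remove the multiplicities --- contradicting $\kappa = 1$. Finally, the third lemma eliminates $\kappa(X) = -\infty$: a negative elliptic curve in $D$ must appear as an $n$-section of a ruling of the minimal model $Y$, the Picard-rank bound of \cite{DiCerbo2} limits the number of cusps to at most two, and blowing up the appropriate singular point of the blow-down configuration and tracking the proper transform of the fiber through it produces a rational curve (with at most one puncture) inside $X \setminus D$, which is impossible.

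With $\kappa(X) \notin \{-\infty, 1, 2\}$, the only surviving value is $\kappa(X) = 0$, which is the assertion of the proposition. There is no genuine obstacle at this stage: the three lemmas do all the work, and the proposition merely records their combined consequence via the Enriques--Kodaira trichotomy. The single point worth a sentence of care is that the appeal to the classification theorem is legitimate precisely because the toroidal compactification $X$ is smooth and projective by construction, so that the four cases for $\kappa(X)$ are genuinely exhaustive.
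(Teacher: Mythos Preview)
Your proposal is correct and matches the paper's approach exactly: the proposition is stated there as a summary of the three preceding lemmas, and your argument simply records that, by the Kodaira--Enriques classification, eliminating $\kappa(X)\in\{-\infty,1,2\}$ forces $\kappa(X)=0$. (One cosmetic slip: you call it a ``trichotomy'' while correctly listing four possible values.)
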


Of course, it remains to be seen if any such example actually
exists. This problem is addressed in the next section.

\section{The case $\kappa$ = 0}\label{zero}

In light of Proposition \ref{reduction}, a toroidal compactification
with $\overline{c}_{2}=1$ must be birational to a minimal surface of
zero Kodaira dimension. Recall that minimal surfaces with zero
Kodaira dimension are given by:
\begin{itemize}

\item K3 surfaces, $c_{2}=24$;

\item Enriques surfaces, $c_{2}=12$;

\item Abelian surfaces, $c_{2}=0$;

\item bi-elliptic surfaces, $c_{2}=0$;

\end{itemize}
for details see again Chapter VI in \cite{Bar}. Thus, let $(X, D)$
be a as in Proposition \ref{reduction}. Since
$\overline{c}_{2}=c_{2}=1$, we have that $X$ is the blow up at
\emph{just} one point of an Abelian or bi-elliptic surface. Now, let
$D_{1}, ..., D_{k}$ be the irreducible components of the divisor
$D$. Since each $D_{i}$ is a smooth elliptic curve with negative
self-intersection, we have
\begin{align}\notag
\overline{c}^{2}_{1}(X)=(K_{X}+\sum_{i}D_{i})^{2}=K^{2}_{X}-\sum_{i}D^{2}_{i}=-1-\sum_{i}D^{2}_{i}.
\end{align}
But now $3\overline{c}_{2}(X)=\overline{c}^{2}_{1}(X)$, which
implies
\begin{align}\notag
-D^{2}_{1}-...-D^{2}_{k}=4.
\end{align}
Therefore, we have the following finite list of configurations:
\begin{itemize}

\item 1 cusp, $D^{2}_{1}=-4$;

\item 2 cusps, $D^{2}_{1}=-1$, $D^{2}_{2}=-3$ or $D^{2}_{1}=-2$, $D^{2}_{2}=-2$;

\item 3 cusps, $D^{2}_{1}=-1$, $D^{2}_{2}=-1$, $D^{2}_{3}=-2$;

\item 4 cusps, $D^{2}_{1}=D^{2}_{2}=D^{2}_{3}=D^{2}_{4}=-1$.

\end{itemize}

Now, let us denote by $(Y, C)$ the blow down configuration of $(X,
D)$. Since $Y$ is an Abelian or bi-elliptic surface, we have
$K_{Y}=0$. Thus, if $C_{i}$ is an irreducible component of $C$ in
$Y$, we have
\begin{align}\notag
p_{a}(C_{i})=1+\frac{C^{2}_{i}}{2}.
\end{align}
Note that $C^{2}_{i}\geq-2$. If $C^{2}=-2$, then $C_{i}$ is a smooth
rational curve. This is impossible since $Y$ is covered by
$\cc^{2}$. If $C^{2}_{i}=0$, with $C_{i}$ non-smooth, then $C_{i}$
is a rational curve with a single node or a cusp. This is again
impossible as, in both of these cases, $C_{i}$ is normalized by a
$\pp_{1}$. In conclusion, either $C_{i}$ is a smooth elliptic curve
with trivial self-intersection, or $C_{i}$ has a singular point, say
$p$, and $C^{2}_{i}=2n$ with $n\geq1$. Let us study the singular
case first. Thus, let
\begin{align}\notag
\pi:X\longrightarrow Y
\end{align}
be the blow up map at $p$. We then have
\begin{align}\notag
\pi^{*}C_{i}=D_{i}+rE
\end{align}
where $D_{i}$ is the proper transform of $C_{i}$ in $X$, $E$ is the
exceptional divisor and $r$ is the multiplicity of the singular
point $p$. Moreover, we have $D_{i}\cdot E=r$,
$D^{2}_{i}=C^{2}_{i}-r^{2}$ and
\begin{align}\label{genus}
2p_{a}(D_{i})-2=2p_{a}(C_{i})-2-r(r-1).
\end{align}
Now, if we want $D^{2}_{i}\leq-1$ with $C_{i}$ not smooth, we must
have
\begin{align}\notag
D^{2}_{i}=2n-r^{2}<-1.
\end{align}
Since $D_{i}$ is a smooth elliptic curve, the equation given in
\ref{genus} simplifies to the quadratic equation
\begin{align}\notag
r^{2}-r-2n=0,
\end{align}
whose solutions are given by
\begin{align}\notag
r_{1,2}=\frac{1\pm\sqrt{1+8n}}{2}.
\end{align}
Since $r$ is a positive integer, we just have to consider the plus
sign in the formula above. Thus, the self-intersection of $D_{i}$ is
given by
\begin{align}\notag
2n-(\frac{1+\sqrt{1+8n}}{2})^{2},
\end{align}
for $n\geq1$. This self-intersection is easily seen to be decreasing
in $n$ and for $n\geq7$ to be less than $-4$. All the possibilities
for $1\leq n\leq 6$ are then given by the following list:

\begin{align}\label{list}
& n=1, \quad C^{2}_{i}=2, \quad r=2;\\ \notag & n=3, \quad
C^{2}_{i}=6, \quad r=3;\\ \notag & n=6, \quad C^{2}_{i}=12,\quad
r=4. \notag
\end{align}






In conclusion, we then have to understand if on an Abelian or
bi-elliptic surface we can find curves with just one singular point
of multiplicities and self-intersections as in \ref{list}. Let us
start by studying the case when $Y$ is an Abelian surface. First, we
observe that the line bundle associated to a curve as in \ref{list}
must be ample.

\begin{lemma}\label{Nakai}
Let $C$ be an irreducible divisor on an Abelian surface $Y$ such
that $C^{2}>0$. Then $L=\mathcal{O}_{Y}(C)$ is ample.
\end{lemma}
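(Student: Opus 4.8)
The plan is to verify ampleness via the Nakai--Moishezon criterion: $L$ is ample on the surface $Y$ precisely when $L^{2}>0$ and $L\cdot C'>0$ for every irreducible curve $C'\subset Y$. The self-intersection condition is immediate, since $L^{2}=C^{2}>0$ by hypothesis, so the whole content lies in checking positivity of $L$ against every irreducible curve.

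For the curve $C$ itself there is nothing to do, as $L\cdot C=C^{2}>0$. If $C'\neq C$ is another irreducible curve, then $C$ and $C'$ have no common component, hence $L\cdot C'=C\cdot C'\geq 0$; the only possibility to exclude is $C\cdot C'=0$. So suppose $C\cdot C'=0$ with $C'\neq C$ irreducible. Then the class of $C'$ is orthogonal to that of $C$ for the intersection pairing, and since $C^{2}>0$ the Hodge index theorem forces $C'^{2}\leq 0$, with equality only if $C'$ is numerically trivial, which is impossible for a nonzero irreducible curve since it pairs positively with any ample class. Hence $C'^{2}<0$.

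Finally I would rule out an irreducible curve of negative self-intersection on an abelian surface. Since $K_{Y}=0$, adjunction gives $2p_{a}(C')-2=C'^{2}+K_{Y}\cdot C'=C'^{2}<0$, so $p_{a}(C')=0$ and the normalization of $C'$ is $\pp_{1}$; this produces a non-constant morphism $\pp_{1}\to Y$, which cannot exist because $Y$ is covered by $\cc^{2}$ and any such morphism lifts to the universal cover and is therefore constant. This contradiction establishes $L\cdot C'>0$ for all irreducible $C'$, and Nakai--Moishezon finishes the proof. The argument is routine; the only point needing a little attention is the strictness $C'^{2}<0$, which is exactly where the Hodge index theorem (together with the non-triviality of the class of an irreducible curve) enters, everything else following from $K_{Y}=0$ and the absence of rational curves on $Y$.
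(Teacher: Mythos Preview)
Your proof is correct but takes a different route from the paper. Both arguments reduce to Nakai--Moishezon and both must rule out an irreducible curve $C'\neq C$ with $C\cdot C'=0$; the difference is in how this is done. The paper exploits the group structure of $Y$ directly: translating $C'$ by a suitable $y\in Y$ produces a curve $t_{y}(C')$ that actually meets $C$, and since translates are numerically equivalent this forces $C\cdot C'=C\cdot t_{y}(C')>0$, a contradiction in one line. You instead invoke the Hodge index theorem to get $C'^{2}<0$, then adjunction with $K_{Y}=0$ to force $p_{a}(C')=0$, and finally the absence of rational curves on a torus. Your argument is a clean application of standard surface theory and would work verbatim on any smooth projective surface with $K=0$ and no rational curves; the paper's argument is shorter and more specific to abelian varieties, using only that translations act transitively and preserve numerical equivalence. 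Either approach is perfectly adequate here.
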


\begin{proof}
Let $E$ be any curve on $Y$, we would like to show that $C\cdot
E>0$. Since $C^{2}>0$, we just need to study the curves $E\neq C$.
For these curves we clearly have $C\cdot E\geq0$. Assume then
$C\cdot E=0$. Let us denote by $t_{y}(E)$ the translation of the
curve $E$ by an element $y\in Y$. By appropriately choosing $y\in
Y$, we can assume that $t_{y}(E)\cap C\neq\{0\}$. Since the curve
$t_{y}(E)$ is numerically equivalent to $E$ we have then reached a
contradiction. We therefore conclude that $L$ is a strictly nef line
bundle with positive self-intersection. The lemma is now a
consequence of Nakai's criterion for ampleness of divisors on
surfaces, see Corollary 6.4 page 161 in \cite{Bar}.
\end{proof}

Next, we show that curves as in \ref{list} cannot exist on an
Abelian surface. The proof of this fact uses standard properties of
theta functions. Recall that any effective divisor on a complex
torus is the divisor of a theta function, see Th\'eor\`eme 3.1 page
43 in \cite{Deb}. Now, let $C$ be a reduced divisor as in Lemma
\ref{Nakai}. Then, if we let $V=\cc^{2}$ and $\pi: V\longrightarrow
V/\Gamma$ be the universal covering map, we have that
\begin{align}\label{theta}
\pi^{*}C=(\theta)
\end{align}
for some theta function on $V$. More precisely, we can find a
Hermitian form $H$, a character $\alpha: \Gamma\longrightarrow U(1)$
and a theta function satisfying \ref{theta} and the following
``normalized'' functional equation
\begin{align}\label{functional}
\theta(z+\gamma)=\alpha(\gamma)e^{\pi H(\gamma,
z)+\frac{\pi}{2}H(\gamma, \gamma)}\theta(z)=e_{\gamma}(z)\theta(z)
\end{align}
for any $z\in V$ and $\gamma\in \Gamma$. Note how $e_{\gamma}$ is
the factor of holomorphy for the line bundle $L=\mathcal{O}_{Y}(C)$.
There is then an identification between the space of sections of L
and the vector space of theta functions of type $(H, \alpha)$ on
$V$.

In light of the list obtained in \ref{list}, we are interested in
the case when $C$ has a singular point only. Thus, let $C\in|L|$ be
reduced divisor and let us denote by $C^{*}=C\backslash\{p\}$ the
smooth part of $C$. For every $q\in C^{*}$, $T_{q}C$ is a well
defined $1$-dimensional subspace of $T_{q}Y$. Thus, if we let
$z_{1}, z_{2}$ be coordinate functions for $V$, the equation for
$T_{q}C$ is given by
\begin{align}\notag
\sum^{2}_{i=1}\partial_{z_{i}}\theta(q)(z_{i}-q_{i})=0.
\end{align}
We can then consider a Gauss type map
\begin{align}\notag
G: C^{*}\longrightarrow \pp_{1}
\end{align}
where
\begin{align}\notag
G(q)=(\partial_{z_{1}}\theta(q): \partial_{z_{2}}\theta(q)).
\end{align}
We claim that since $C$ is reduced and $L$ is ample, then the Gauss
map cannot be constant. Let us proceed by contradiction. Say that
the image of the Gauss map is the point $[x_{1}: x_{2}]\in \pp_{1}$.
If $x_{2}\neq 0$, let us define the derivation
\begin{align}\notag
\partial_{w}:=\partial_{z_{1}}-k\partial_{z_{2}}
\end{align}
where $k=x_{1}/x_{2}$. If $x_{2}=0$, let us simply consider the
derivative along the second coordinate function, in other words
$\partial_{w}=\partial_{z_{2}}$. By construction, we have
$\partial_{w}\theta=0$ for all $q\in C^{*}$. Since $C$ is reduced,
the function
\begin{align}\notag
f=\partial_{w}\theta/\theta
\end{align}
is holomorphic on $V$ except at the singular points of $\pi^{*}C$.
By the Hartogs extension theorem, we know that $f$ can be extended
to a holomorphic function on $V$. Because of the functional equation
\ref{functional}, we have
\begin{align}\notag
f(z+\gamma)-f(z)=\pi H(\gamma, v)
\end{align}
for any $\gamma\in\Gamma$, where \begin{equation}\notag
v=\partial_{w}\left(
\begin{array}{ccc}
z_1 \\
z_2
\end{array}\right).
\end{equation}
This implies
\begin{align}\notag
f(z)=\pi H(z, v)+K
\end{align}
for some constant $K$. Since $f$ is holomorphic and $H$ is
anti-holomorphic in $z$, we have therefore reached a contradiction.
To sum up, we have shown that for any derivation $\partial_{w}$, the
function $\partial_{w}\theta$ cannot be identically zero on $C^{*}$.

Now, because of functional equation given in \ref{theta}, the
restriction of $\partial_{w}\theta$ to $\pi^{*}C$ can be considered
as a section of the line bundle $L$ restricted to $C$. Thus, the
intersection number $(\partial_{w}\theta)\cdot C$ coincides with the
self-intersection $C^{2}$. Let us now consider a derivation
$\partial_{w}$ with parameter $w$ determined by a generic point in
the image of the Gauss map. Say that the multiplicity of the
singular point $p$ is $r_{p}$. The intersection number of
$(\partial_{w}\theta)$ and $C$ at the singular point $p$ is then
computed by $r_{p}(r_{p}-1)$. Moreover, by construction
$\partial_{w}\theta$ vanishes somewhere on $C^{*}$. We conclude that
\begin{align}\label{obstruction}
C^{2}\geq r_{p}(r_{p}-1)+1.
\end{align}

Now in all of the cases given in \ref{list}, we have
\begin{align}\notag
C^{2}_{i}=r(r-1)
\end{align}
so that, using \ref{obstruction}, we can rule out the cases of one,
two and three cusps.

Let us summarize these results into a lemma.

\begin{lemma}\label{abelian}
Let $(X, D)$ be a toroidal compactification with
$\overline{c}_{2}=1$ and $\kappa(X)=0$. If $X$ is not bi-rational to a bi-elliptic surface, then $X$ is the blow up of
an Abelian surface. Moreover, $D$ consits of four disjoint smooth elliptic curves with $D^{2}_{1}=D^{2}_{2}=D^{2}_{3}=D^{2}_{4}=-1$. 
\end{lemma}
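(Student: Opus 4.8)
The plan is to put together the pieces already established in this section and show that the only surviving configuration is the one with four $(-1)$-curves on a blow-up of an Abelian surface. The argument has two layers. First, the case $\kappa(X)=0$ together with $\overline{c}_2=1$ forces $X$ to be the blow-up at a single point of a minimal surface of Kodaira dimension zero with $c_2=0$, hence of an Abelian or a bi-elliptic surface. Since we are assuming $X$ is not birational to a bi-elliptic surface, it must be a one-point blow-up of an Abelian surface $Y$. This handles the first assertion of the lemma essentially immediately from Proposition \ref{reduction} and the Enriques--Kodaira list recalled at the start of the section.

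Next I would pin down the cusp configuration. The computation $-D_1^2-\cdots-D_k^2=4$ carried out above gives the four numerical possibilities: one cusp with $D_1^2=-4$; two cusps with $(-1,-3)$ or $(-2,-2)$; three cusps with $(-1,-1,-2)$; four cusps with all $D_i^2=-1$. I would then push each $D_i$ down to its blow-down image $C_i\subset Y$ and invoke the dichotomy established earlier: each $C_i$ is either a smooth elliptic curve of self-intersection zero, or it has a single singular point of multiplicity $r$ with $C_i^2=2n$ and $n\in\{1,3,6\}$, i.e. $(C_i^2,r)\in\{(2,2),(6,3),(12,4)\}$, in each case satisfying $C_i^2=r(r-1)$. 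For such a singular curve, Lemma \ref{Nakai} shows $L=\oo_Y(C_i)$ is ample, and the Gauss-map/theta-function argument above yields the obstruction $C_i^2\ge r_p(r_p-1)+1$. Since every entry in the list \ref{list} has $C_i^2=r(r-1)$, this inequality is violated, so no such singular curve exists on $Y$. Therefore every component $C_i$ must be a smooth elliptic curve with $C_i^2=0$, which forces $D_i^2=-1$ for the proper transform; this is compatible only with the four-cusp configuration $D_1^2=D_2^2=D_3^2=D_4^2=-1$.

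The one point that still needs care is why a smooth elliptic $C_i$ with $C_i^2=0$ necessarily passes through the blown-up point $p$, so that its proper transform picks up self-intersection $-1$ — and why the components remain disjoint smooth elliptic curves in $X$. Here I would note that $D$ is by construction a disjoint union of smooth elliptic curves (the toroidal boundary), so each $D_i$ is smooth elliptic with negative self-intersection; pushing forward, $C_i=\pi(D_i)$ is an elliptic curve on $Y$ and the only way to raise $D_i^2<0$ up to $C_i^2\ge 0$ under a single blow-up is $C_i^2 = D_i^2 + r^2$ with $D_i^2=-1$, $r=1$, i.e. $p\in C_i$ smooth. The constraint $\sum -D_i^2=4$ then gives exactly four such curves, and disjointness of the $D_i$ in $X$ (again from the toroidal structure) completes the proof. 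The main obstacle is bookkeeping: making sure the case analysis over the four numerical configurations is exhaustive and that the Gauss-map obstruction \ref{obstruction} is correctly applied to each singular case in \ref{list}; but since all of \ref{list} satisfies $C^2=r(r-1)$ exactly, the obstruction rules them out uniformly, and no genuinely new difficulty arises.
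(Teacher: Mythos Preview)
Your proposal is correct and follows essentially the same approach as the paper: reduce to a one-point blow-up of an Abelian surface via the Enriques--Kodaira list, enumerate the four numerical cusp configurations from $\sum -D_i^2=4$, rule out all singular blow-down curves $C_i$ using Lemma~\ref{Nakai} and the Gauss-map inequality~\eqref{obstruction} (since every entry of~\eqref{list} satisfies $C_i^2=r(r-1)$ exactly), and conclude that only the four-cusp configuration with $D_i^2=-1$ survives. Your third paragraph makes explicit the small bookkeeping point that a smooth elliptic $C_i$ with $C_i^2=0$ must pass through the blow-up point (else $D_i^2=0$, contradicting negativity), which the paper leaves implicit.
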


Thus, in light of  Lemma \ref{abelian}, we have
to classify the pairs $(Y, C)$ where $Y$ is an Abelian surface and
$C$ consists of four smooth elliptic curves intersecting in just one
point. We will show that, up to isomorphism, there is only one such
pair. This result follows from few geometrical facts.

\begin{fact}\label{fatto1}
Let $Y=\cc^{2}/\Gamma$ be an Abelian surface containing two smooth
elliptic curves $C_{1}$, $C_{2}$ such that $C_{1}\cdot C_{2}=1$.
Then $Y$ is isomorphic to the product $C_{1}\times C_{2}$.
\end{fact}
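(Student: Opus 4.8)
The plan is to show that, after a harmless translation, $C_1$ and $C_2$ are one–dimensional abelian subvarieties of $Y$ meeting transversally in the single point $0$, and then to observe that the addition homomorphism $C_1\times C_2\to Y$ must be an isomorphism.

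First I would reduce to the case $0\in C_1\cap C_2$. Since $C_1\ne C_2$ (both have self-intersection zero, while $C_1\cdot C_2=1$), the two irreducible curves meet in finitely many points, and since $C_1\cdot C_2=1>0$ this intersection is non-empty; pick $p\in C_1\cap C_2$ and replace $C_1,C_2$ by their translates $C_1-p,\,C_2-p$. Translation by $-p$ is an automorphism of $Y$, carries each $C_i$ to a curve isomorphic to it, and preserves intersection numbers, so neither the hypothesis nor the conclusion is affected. Now $C_1$ and $C_2$ are smooth curves of genus $1$ through the origin, and here I would invoke the rigidity principle for abelian varieties: viewing each $C_i$ as an elliptic curve with origin $0\in Y$, the inclusion $C_i\hookrightarrow Y$ is a morphism of abelian varieties sending $0$ to $0$, hence a homomorphism; being a closed embedding, it identifies $C_i$ with a one-dimensional abelian subvariety $E_i\subseteq Y$. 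This is the one place where a genuine input from the theory of abelian varieties is needed; everything afterwards is formal.

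Next I would consider the addition homomorphism $\mu\colon C_1\times C_2\to Y$, $(x,y)\mapsto x+y$, a homomorphism of abelian surfaces. Since $C_2=E_2$ is a subtorus we have $-C_2=C_2$, so $\ker\mu=\{(x,-x):x\in C_1\cap C_2\}$, a finite group scheme isomorphic to the scheme-theoretic intersection $C_1\cap C_2$; hence $\mu$ is an isogeny of degree equal to the length of $C_1\cap C_2$, i.e. to $C_1\cdot C_2$. Because $C_1$ and $C_2$ are everywhere smooth, the local intersection multiplicity at any common point is at least $1$, with equality precisely when the intersection is transverse there; thus $C_1\cdot C_2=1$ forces $C_1\cap C_2$ to be the single reduced point $0$ and $\deg\mu=1$.

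Finally, an isogeny of degree $1$ is an isomorphism, so $\mu\colon C_1\times C_2\to Y$ is an isomorphism of abelian surfaces. Since each $E_i$ is isomorphic as an abstract curve to the original $C_i$, this yields $Y\cong C_1\times C_2$, as claimed. The only delicate step is the identification of a smooth genus-$1$ curve through the origin with an abelian subvariety; granting that, the rest is a one-line degree computation.
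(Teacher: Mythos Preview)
Your argument is correct and follows essentially the same route as the paper: translate so the curves pass through the origin, identify them with abelian subvarieties, and use the group-law map $C_1\times C_2\to Y$ (the paper uses $(p,q)\mapsto p-q$, you use $(x,y)\mapsto x+y$, which agree up to the automorphism $\mathrm{id}\times(-1)$ since $-C_2=C_2$). Your version is in fact more complete, since the paper only asserts the map is ``clearly one-to-one'' without addressing surjectivity, whereas you obtain both at once by computing the degree of the isogeny from the intersection number.
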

\begin{proof}
By translating the curves $C_{1}$ and $C_{2}$, we can always assume
that $C_{1}\cap C_{2}= \{(0, 0)\}$. The curves $C_{i}$, $i=1, 2$,
are then subgroups of $Y$. Thus, we can define the map
\begin{align}\notag
\varphi: C_{1}\times C_{2}\longrightarrow Y
\end{align}
which sends the point $(p, q)\in C_{1}\times C_{2}$ to $p-q\in Y$.
The map $\varphi$ is clearly one-to-one.
\end{proof}

\begin{fact}\label{fatto2}
Let $Y=\cc^{2}/\Gamma$ be an Abelian surface containing three smooth
elliptic curves $C_{i}$, $i=1, 2, 3$, such that $C_{1}\cap C_{2}\cap
C_{3}=\{(0, 0)\}$ and such that $C_{i}\cdot C_{j}=1$ for any $i\neq
j$. Then $Y$ is isomorphic to the product $C\times C$ where
$C_{i}=C$ for any $i$.
\end{fact}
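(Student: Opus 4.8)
The plan is to bootstrap from Fact~\ref{fatto1}. Since $C_1\cap C_2\cap C_3=\{(0,0)\}$, in particular $C_1\cap C_2=\{(0,0)\}$ and $C_1\cdot C_2=1$, so Fact~\ref{fatto1} already supplies an isomorphism $\varphi:C_1\times C_2\longrightarrow Y$, $(p,q)\mapsto p-q$. The idea is then to locate the third curve $C_3$ inside $C_1\times C_2$ and to show that both coordinate projections restrict to isomorphisms on it; this forces $C_1\cong C_3\cong C_2$ and hence $Y\cong C\times C$.

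First I would record that $C_3$ is a subtorus of $Y$: it is a smooth curve of genus $1$ passing through the origin, so the Abel--Jacobi map identifies it with its own Jacobian, and functoriality of the Albanese variety shows that the inclusion $C_3\hookrightarrow Y$ is a homomorphism --- this is exactly the observation used for $C_1$ and $C_2$ in the proof of Fact~\ref{fatto1}. Set $\widetilde{C}_3:=\varphi^{-1}(C_3)$, a one--dimensional abelian subvariety of $C_1\times C_2$. Using that $C_2=-C_2$ one checks $\varphi^{-1}(C_1)=C_1\times\{0\}$ and $\varphi^{-1}(C_2)=\{0\}\times C_2$, and since $\varphi$ is an isomorphism,
\begin{align}\notag
\widetilde{C}_3\cdot(C_1\times\{0\})=C_3\cdot C_1=1,\qquad
\widetilde{C}_3\cdot(\{0\}\times C_2)=C_3\cdot C_2=1.
\end{align}

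Next, $C_1\times\{0\}$ is a fibre of the second projection $\mathrm{pr}_2\colon C_1\times C_2\to C_2$ and $\{0\}\times C_2$ a fibre of $\mathrm{pr}_1$, so the two displayed intersection numbers compute the degrees of $\mathrm{pr}_2|_{\widetilde{C}_3}$ and $\mathrm{pr}_1|_{\widetilde{C}_3}$. Each of these restrictions is a homomorphism of elliptic curves; it is non-constant because the corresponding intersection number is nonzero (if $\widetilde{C}_3$ lay in a fibre it would, being a subtorus through the origin, equal $C_1\times\{0\}$ or $\{0\}\times C_2$, whose self-intersection is $0$), hence surjective, hence of degree equal to that intersection number, namely $1$. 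A surjective degree-one homomorphism of elliptic curves is an isomorphism, so $C_1\cong\widetilde{C}_3\cong C_2$; calling this common elliptic curve $C$, we get $Y\cong C_1\times C_2\cong C\times C$ with $C_i\cong C$ for every $i$.

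The argument is short, and the only points that require care --- and that genuinely use the full hypothesis that \emph{all three} pairwise intersection numbers equal $1$ --- are the verification that $C_3$ is actually a subgroup (so that $\widetilde{C}_3$ is a subtorus and the coordinate projections restrict to homomorphisms rather than merely morphisms) and the translation of the conditions $C_i\cdot C_j=1$ into the statement that the projections $\widetilde{C}_3\to C_i$ have degree one. I expect no real obstacle beyond being careful that $\varphi^{-1}(C_1)$ and $\varphi^{-1}(C_2)$ are exactly the two coordinate axes.
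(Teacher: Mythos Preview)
Your argument is correct and follows the same line as the paper's: invoke Fact~\ref{fatto1} to identify $Y$ with $C_1\times C_2$, then use $C_3\cdot C_i=1$ to deduce that $C_3$ is isomorphic to each factor. The paper compresses the second step into a single sentence, whereas you spell out explicitly why the intersection numbers become the degrees of the coordinate projections $\widetilde{C}_3\to C_i$ and hence force these to be isomorphisms; this is precisely the content the paper leaves implicit.
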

\begin{proof}
By Fact \ref{fatto1}, we have that $Y=C_{1}\times C_{2}$. Since
$C_{3}\cdot C_{1}=1$, for $i=1, 2$, we conclude that $C_{3}=C_{i}$
for $i=1, 2$. The proof is complete.
\end{proof}

\begin{fact}\label{fatto3}
Let $Y$ be an Abelian surface which is the product of two identical
elliptic curves, say $C=\cc/\Lambda$. Let $(w, z)$ be the natural
product coordinates on $Y$. Then any smooth elliptic curve in $Y$,
passing through the point $(0, 0)$, is given by an equation of the
form $w=\alpha z$, with $\alpha$ such that
$\alpha\Lambda\subseteq\Lambda$.
\end{fact}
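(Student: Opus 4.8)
The plan is to prove that such a curve is necessarily a one-dimensional abelian subvariety of $Y$, and then to read off its defining equation from the complex line it lifts to in the universal cover.

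First I would settle the group-theoretic structure. Let $C'\subset Y$ be a smooth elliptic curve with $(0,0)\in C'$, and take $(0,0)$ as the origin of the group law of $C'$, so that $(C',0)$ is an elliptic curve. The inclusion $\iota: C'\hookrightarrow Y$ is then a holomorphic map of complex tori carrying origin to origin, hence a group homomorphism; being injective, it exhibits $C'$ as a one-dimensional abelian subvariety of $Y$. Alternatively, in the spirit of the earlier sections one can argue with the Gauss map: by adjunction on the abelian surface $Y$ one has ${C'}^{2}=2p_{a}(C')-2=0$, so the Gauss map $C'\to\pp_{1}$ has degree ${C'}^{2}=0$ and is constant, i.e.\ $C'$ has constant tangent direction, which forces it to be a subtorus since it passes through the origin and is compact.

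Next I would pass to the universal cover $\pi:\cc^{2}\to Y$, writing $Y=\cc^{2}/\Gamma$ with $\Gamma=\Lambda\oplus\Lambda$ in the product coordinates $(w,z)$. The connected component of $\pi^{-1}(C')$ through the origin is a complex line $\ell$ with $0\in\ell$, and $\Lambda':=\ell\cap\Gamma$ is a rank-two lattice in $\ell$ with $C'=\ell/\Lambda'$. After, if necessary, interchanging the two (equal) factors of $Y$ — an automorphism of $Y$ — the line $\ell$ can be written as $\{(w,z):w=\alpha z\}$ for a unique $\alpha\in\cc$, which is the asserted equation; then $\Lambda'=\{(\alpha t,t):t\in\Lambda,\ \alpha t\in\Lambda\}$, and the second-coordinate projection identifies $\Lambda'$ with $\Lambda_{0}:=\{t\in\Lambda:\alpha t\in\Lambda\}$. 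Since $\Lambda'$ has rank two, $\Lambda_{0}$ has finite index in $\Lambda$ and $\alpha\Lambda_{0}\subseteq\Lambda$; upgrading this to $\Lambda_{0}=\Lambda$ — equivalently, to the statement that the projection $C'\to C$ used to present the curve is an isomorphism and not a proper isogeny — then gives $\alpha\Lambda=\alpha\Lambda_{0}\subseteq\Lambda$. For the converse, any $\alpha$ with $\alpha\Lambda\subseteq\Lambda$ makes $\{w=\alpha z\}$ descend to a smooth elliptic curve through the origin isomorphic to $C$.

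I expect the only genuine obstacle to be this last point: passing from the a priori conclusion ``$\alpha$ maps some finite-index sublattice of $\Lambda$ into $\Lambda$'' to the sharp condition $\alpha\Lambda\subseteq\Lambda$. In the situation where the fact is applied this follows from the extra information carried by the configuration — the relevant curves all pass through one common point and meet pairwise transversally there, i.e.\ $C_{i}\cdot C_{j}=1$ — since intersecting $C'$ with a coordinate curve realized as one of the $C_{j}$ forces the pertinent lattice index to be $1$. The remaining ingredients (rigidity of holomorphic maps between complex tori, or the vanishing of the degree of the Gauss map, and the lift of a subtorus to a line in $\cc^{2}$) are standard.
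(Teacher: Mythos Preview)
Your argument follows the same line as the paper's two-sentence proof: the curve through the origin is a subtorus, it lifts to a complex line $w=\alpha z$ in $\cc^{2}$, and one then asks when this line descends. Your two justifications for the subtorus step (rigidity of origin-preserving holomorphic maps of tori, or the degree-zero Gauss map) are both valid; the paper simply asserts ``a subgroup in $\cc^{2}$ is given by an equation of the form $w=\alpha z$'' without further comment.

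You are right to flag the final step, and in fact the Fact as literally stated is not true. The image of $s\mapsto(2s,3s)$ in $C\times C$ is a smooth elliptic curve through the origin whose lift is the line $3w=2z$, i.e.\ $\alpha=\tfrac{2}{3}$; but $\tfrac{2}{3}\Lambda\not\subseteq\Lambda$ for any lattice $\Lambda$, and the same failure occurs for the other projection since $\tfrac{3}{2}\Lambda\not\subseteq\Lambda$. What one actually obtains from the descent is exactly your conclusion: $\alpha\Lambda_{0}\subseteq\Lambda$ for some finite-index sublattice $\Lambda_{0}\subseteq\Lambda$, namely $\Lambda_{0}=\{t\in\Lambda:\alpha t\in\Lambda\}$. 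The paper's proof (``this equation makes sense on $Y$ if $\alpha\Lambda\subseteq\Lambda$'') only records the sufficient direction and glosses over this point.

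Your proposed repair is the correct one and is precisely how the paper's argument is salvaged in practice. In the application every curve $C_{i}$ under consideration satisfies $C_{i}\cdot C_{1}=C_{i}\cdot C_{2}=1$ with $C_{1},C_{2}$ the two coordinate axes, so both projections $C_{i}\to C$ have degree one; this forces $\Lambda_{0}=\Lambda$ and hence $\alpha\Lambda\subseteq\Lambda$ (indeed $\alpha\Lambda=\Lambda$, which is Fact~\ref{fatto4}). So the defect is in the statement of this isolated Fact, not in the global argument, and your proposal both detects it and supplies the missing hypothesis.
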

\begin{proof}
A subgroup in $\cc^{2}$ is given by an equation of the form
$w=\alpha z$. Finally, this equation makes sense on $Y$ if
$\alpha\Lambda\subseteq\Lambda$.
\end{proof}

\begin{fact}\label{fatto4}
Let us denote by $C_{\alpha}$ the curve in $Y=C\times C$ given by
the equation $w=\alpha z$ with $\alpha\Lambda\subseteq\Lambda$ and
$\alpha\neq 0$. Then $C_{0}\cdot C_{\alpha}=1$ if and only if
$\alpha\Lambda=\Lambda$.
\end{fact}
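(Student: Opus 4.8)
The plan is to compute $C_{0}\cdot C_{\alpha}$ directly, as an honest count of transverse intersection points. Both $C_{0}$ (the curve $w=0$) and $C_{\alpha}$ are subgroups of $Y=(\cc/\Lambda)\times(\cc/\Lambda)$ passing through the origin, so no translation is needed. A point $(w,z)\in Y$ lies on $C_{0}\cap C_{\alpha}$ exactly when $w=0$ and $\alpha z\in\Lambda$; hence $C_{0}\cap C_{\alpha}$ is identified with the kernel of multiplication by $\alpha$ on $\cc/\Lambda$. Since $\alpha\Lambda\subseteq\Lambda$, this kernel is the finite group $(\tfrac{1}{\alpha}\Lambda)/\Lambda$, whose order is $[\tfrac{1}{\alpha}\Lambda:\Lambda]=[\Lambda:\alpha\Lambda]$.

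Next I would check transversality. Lifting to $\cc^{2}$ with coordinates $(w,z)$, the tangent line to $C_{0}$ at any point is spanned by $\partial_{z}$, while $C_{\alpha}$ is the image of the group homomorphism $z\mapsto(\alpha z,z)$, whose tangent line is spanned by $\alpha\partial_{w}+\partial_{z}$; since $\alpha\neq 0$ these two lines are distinct. Therefore every point of $C_{0}\cap C_{\alpha}$ is a transverse intersection point, and (using that $C_{\alpha}$ is smooth and irreducible, being isomorphic to $\cc/\Lambda$ via $z\mapsto(\alpha z,z)$ because $\alpha\neq 0$) the intersection number literally counts these points with multiplicity one each, so $C_{0}\cdot C_{\alpha}=[\Lambda:\alpha\Lambda]$.

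Finally, since $\alpha\Lambda\subseteq\Lambda$, the index $[\Lambda:\alpha\Lambda]$ equals $1$ if and only if $\alpha\Lambda=\Lambda$, which yields both implications of the statement at once. For concreteness one can also record that $[\Lambda:\alpha\Lambda]=|\alpha|^{2}$: choosing a $\zz$-basis $\omega_{1},\omega_{2}$ of $\Lambda$ and writing $\alpha\omega_{i}=\sum_{j}a_{ij}\omega_{j}$ with $a_{ij}\in\zz$, the matrix $(a_{ij})$ represents multiplication by $\alpha$ on $\cc\cong\rr^{2}$, so its determinant is $|\alpha|^{2}>0$, giving $C_{0}\cdot C_{\alpha}=|\alpha|^{2}$. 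The only genuinely delicate point in all of this is the transversality claim; once it is in place, the rest is an elementary computation with lattices.
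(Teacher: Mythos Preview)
Your argument is correct and follows the same idea as the paper: one identifies $C_{0}\cap C_{\alpha}$ set-theoretically with $(\tfrac{1}{\alpha}\Lambda)/\Lambda$ and reads off the index $[\Lambda:\alpha\Lambda]$. The paper states this in a single sentence without justifying transversality or the formula $[\Lambda:\alpha\Lambda]=|\alpha|^{2}$; your version makes both points explicit, which is a welcome addition rather than a different approach.
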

\begin{proof}
The intersection $C_{0}\cap C_{\alpha}$ consists of $[\alpha\Lambda:
\Lambda]$ distinct points, where by $[\alpha\Lambda: \Lambda]$ we
denote the index of the subgroup $\alpha\Lambda$ in $\Lambda$.
\end{proof}

Let us now go back to our original problem.  We want to classify all
the configurations of four elliptic curves $C_{i}$, $i=1, 2, 3, 4$,
in an Abelian surface $Y$ such that
\begin{align}\notag
C_{1}\cap C_{2}\cap C_{3}\cap C_{4}=\{p\}
\end{align}
for a point $p\in Y$ and
\begin{align}\notag
C_{i}\cdot C_{j}=1
\end{align}
for any $i\neq j\in\{1, 2, 3, 4\}$. Any such configuration will be
referred as \emph{good configuration}. Now, by translating the
$C_{i}'$s, we can assume the point $p$ to coincide with the origin
in $Y$. Because of Facts \ref{fatto1} and \ref{fatto2}, we can
assume $Y=C\times C$ with the curves $C_{1}$ and $C_{2}$ being the
factors in the splitting of $Y$. Because of Facts \ref{fatto3} and
\ref{fatto4}, we have to look for values of $\alpha$, say
$\alpha_{1}$ and $\alpha_{2}$, such that
\begin{align}\notag
C_{3}=C_{\alpha_{1}}, \quad C_{4}=C_{\alpha_{2}}.
\end{align}
Now, for a generic elliptic curve $C=\cc/\Lambda$, the only values
of $\alpha$ such that $\alpha\Lambda=\Lambda$ are given by
$\alpha=\pm 1$. If this is the case, note that $C_{1}\cap C_{-1}$
consists of four disjoint points. These points are exactly the
two-torsion points of the lattice $\Lambda$. In conclusion, for a
generic elliptic curve $C$, the Abelian surface $Y=C\times C$ cannot
support a good configuration. It remains to treat the case of a
non-generic elliptic curve $C$. Recall that there are only two
elliptic curves with non-generic automorphism group. These elliptic
curves correspond to the lattices $\Lambda_{(1, i)}=\zz+\zz i$,
$\Lambda_{(1, \tau)}=\zz+\zz\tau$ where $\tau=e^{\frac{\pi i}{3}}$.

For the lattice $\Lambda_{(1, i)}$, we have four choices of the
value of $\alpha$ so that $\alpha\Lambda_{(1, i)}=\Lambda_{(1, i)}$:
\begin{align}\notag
\alpha=1, i, i^{2}, i^{3}.
\end{align}
It turns out that none of the possible choices involving these
parameters give a good configuration. To this aim, it suffices to
observe that the configuration
\begin{align}\notag
w=0,\quad z=0,\quad w=z, \quad w=iz,
\end{align}
is such that
\begin{align}\notag
C_{1}\cap C_{i}=\{(0, 0), \quad (1/2+i/2, 1/2+i/2)\}.
\end{align}
Any other configuration is either isomorphic to the one above or
fails to be a good configuration by completely analogous reasons.

For the lattice $\Lambda_{(1, \tau)}$, we have six choices of the
value of $\alpha$ so that $\alpha\Lambda_{(1, \tau)}=\Lambda_{(1,
\tau)}$:
\begin{align}\notag
\alpha=1, \tau, \tau^{2}, \tau^{3}, \tau^{4}, \tau^{5}.
\end{align}
Let us observe that
\begin{align}\notag
w=0,\quad z=0,\quad w=z, \quad w=\tau z,
\end{align}
is a good configuration. In fact, the curves $C_{1}$ and $C_{\tau}$
intersect at the points whose z values satisfy the equality
\begin{align}\label{equality}
(\tau-1)z=0\mod\Lambda_{(1, \tau)}.
\end{align}
Since $(\tau-1)=\tau^{2}$, we conclude that
\begin{align}\notag
C_{1}\cap C_{\tau}=\{(0, 0)\}.
\end{align}
We claim that this is the only configuration that does the job.
First, let us try the configuration given by
\begin{align}\notag
w=0,\quad z=0,\quad w=z, \quad w=\tau^{2} z.
\end{align}
Observe that the curves $C_{1}$ and $C_{\tau^{2}}$, not only meet at
the origin, but also in other two distinct points. These points are
the two distinct zeros of the Weierstrass $\wp$-function associated
to the lattice $\Lambda_{(1, \tau)}$. More precisely, we have
\begin{align}\notag
C_{1}\cap C_{\tau^{2}}=\{(0, 0),\quad ((1-\tau^{2})/3,
(1-\tau^{2})/3),\quad ((\tau^{2}-1)/3, (\tau^{2}-1)/3)\}.
\end{align}
As the reader can easily verify, any other configuration can be
reduced to the two above or to the configuration
\begin{align}\notag
w=0,\quad z=0,\quad w=z, \quad w=-z,
\end{align}
which we already know not to be good. In conclusion, we have the
following lemma.

\begin{lemma}\label{final}
Let $(X, D)$ be a toroidal compactification with
$\overline{c}_{2}=1$ and $\kappa(X)=0$ for which $X$ is not birational to a bi-elliptic surface. Then $X$ is the blow up of
an Abelian surface $Y=\cc^{2}/\Gamma$ with $\Gamma=\Lambda_{(1,
\tau)}\times\Lambda_{(1, \tau)}$ and $\tau=e^{\frac{\pi i}{3}}$.
Moreover, the blow down divisor $C$ of $D$ is given by
\begin{align}\notag
w=0,\quad z=0,\quad w=z, \quad w=\tau z,
\end{align}
where $(w, z)$ are the natural product coordinates on $Y$.
\end{lemma}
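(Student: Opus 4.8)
The plan is to combine Lemma~\ref{abelian} with the classification of \emph{good configurations} carried out in the discussion above; the only new content is the bookkeeping that identifies the blow-down configuration of $(X,D)$ with such a configuration. By Lemma~\ref{abelian} I may write $\pi\colon X\to Y$ for the blow-up of an Abelian surface $Y=\cc^{2}/\Gamma$ at a point $p$, with $D=D_{1}+\cdots+D_{4}$ a disjoint union of smooth elliptic curves and $D_{i}^{2}=-1$. First I would pin down the blow-down configuration $(Y,C)$, writing $C_{i}=\pi(D_{i})$. If some $C_{i}$ were singular at a point of multiplicity $r_{i}\ge 2$, the genus and self-intersection identities preceding~\ref{list} give $D_{i}^{2}=-r_{i}\le-2$, a contradiction; and if some smooth $C_{i}$ avoided $p$, then $D_{i}^{2}=C_{i}^{2}\ge 0$, again impossible. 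Hence each $C_{i}$ is a smooth elliptic curve with $C_{i}^{2}=0$ passing through $p$ with multiplicity one. Applying the projection formula to $\pi^{*}C_{i}=D_{i}+E$ turns the disjointness $D_{i}\cdot D_{j}=0$ into $C_{i}\cdot C_{j}=1$, with the whole intersection concentrated at $p$. In other words $(Y,C)$ is precisely a good configuration in the sense fixed above.

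Next I would invoke the classification of good configurations. By Facts~\ref{fatto1} and~\ref{fatto2}, after translating $p$ to the origin I may assume $Y=C\times C$ for a single elliptic curve $C=\cc/\Lambda$, with $C_{1},C_{2}$ the two coordinate curves. By Facts~\ref{fatto3} and~\ref{fatto4}, the remaining curves have the form $C_{\alpha}\colon\{w=\alpha z\}$ with $\alpha\Lambda=\Lambda$, and $C_{\alpha}\cdot C_{\beta}=[\Lambda:(\alpha-\beta)\Lambda]=N(\alpha-\beta)$; thus a good configuration is exactly a choice of four distinct ``slopes'', among $0$, $\infty$ and the units of the order $\mathrm{End}(C)=\{\alpha:\alpha\Lambda\subseteq\Lambda\}$, whose pairwise differences are again units. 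Using the automorphisms of $C\times C$ coming from $\mathrm{GL}_{2}(\mathrm{End}(C))$ (shears and unit scalings on the two factors, which preserve $\Lambda\times\Lambda$ in each of the relevant cases), one can normalize three of the slopes to be $0$, $\infty$ and $1$; the normalization is legitimate because every element used is forced to be a unit by the intersection conditions. The fourth curve is then some $C_{\alpha}$ with $\alpha$ and $\alpha-1$ both units of $\mathrm{End}(C)$ and $\alpha\notin\{0,1\}$.

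Now I would run through the possibilities. If $C$ has no complex multiplication then $\mathrm{End}(C)=\zz$, whose only units are $\pm1$; the one option $\alpha=-1$ gives $\alpha-1=-2$, not a unit, so no good configuration exists (in particular one cannot even fit a fourth curve). Hence $C$ must be one of the two curves with extra automorphisms. For $C=\cc/\Lambda_{(1,i)}$ the units of $\mathrm{End}(C)=\zz[i]$ are $\{\pm1,\pm i\}$, and since $N(i-1)=N(-i-1)=2$ and $N(-2)=4$, none of the choices $\alpha\in\{i,-1,-i\}$ works. For $C=\cc/\Lambda_{(1,\tau)}$, $\tau=e^{\pi i/3}$, the units of $\mathrm{End}(C)=\zz[\tau]$ are the six powers of $\tau$, and the identity $\tau-1=\tau^{2}$ shows $\alpha=\tau$ does work: the configuration $w=0,\ z=0,\ w=z,\ w=\tau z$ is good. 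Among the remaining candidates $\alpha\in\{\tau^{2},-1,\tau^{4},\tau^{5}\}$ one computes $N(\tau^{2}-1)=N(\tau^{4}-1)=3$ and $N(-2)=4$, while $\alpha=\tau^{5}$ gives $\alpha-1=-\tau$ a unit but yields a configuration isomorphic to the previous one via the swap $(w,z)\mapsto(z,w)$, which sends $C_{\alpha}$ to $C_{\alpha^{-1}}$ and interchanges the two axes. Thus, up to isomorphism, $\cc^{2}/(\Lambda_{(1,\tau)}\times\Lambda_{(1,\tau)})$ carrying the displayed divisor is the unique good configuration, which is the assertion of the lemma.

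The step I expect to be the main obstacle is the normalization in the second paragraph together with the exhaustiveness claim in the third: one must check that \emph{every} admissible four-tuple of slopes can be moved, by an automorphism of $C\times C$ preserving the product structure (up to relabelling the four curves), into the short list of explicit normal forms, so that the handful of norm computations above genuinely settles all cases. The arithmetic itself — evaluating $N(\alpha-\beta)=[\Lambda:(\alpha-\beta)\Lambda]$ for the finitely many relevant $\alpha,\beta$ — is entirely routine.
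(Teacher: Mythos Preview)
Your proof is correct and follows essentially the same route as the paper: reduce via Lemma~\ref{abelian} to a good configuration of four elliptic curves on an Abelian surface, use Facts~\ref{fatto1}--\ref{fatto4} to reach $Y=C\times C$ with the curves given by slopes, and then eliminate all lattices except $\Lambda_{(1,\tau)}$ by checking which units $\alpha$ have $\alpha-1$ again a unit. Your bookkeeping via $N(\alpha-\beta)$ and the $\mathrm{GL}_{2}(\mathrm{End}(C))$-normalization is somewhat cleaner than the paper's explicit intersection-point computations and ad hoc ``any other configuration can be reduced to\ldots'' step, and your explicit treatment of $\alpha=\tau^{5}$ via the swap $(w,z)\mapsto(z,w)$ fills in a detail the paper leaves to the reader, but the argument is the same.
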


\section{Proof of the main theorem and conclusions}\label{conclusion}

In this final section, we combine the results proved in Sections
\ref{different} and \ref{zero} to give a proof of Theorem
\ref{primo} stated in Section \ref{prelim}.

\begin{proof}[Proof of Theorem \ref{primo}]
Let $(X, D)$ be a toroidal compactification with
$\overline{c}_{2}=1$ for which $X$ is not birational to a bi-elliptic surface. Because of Proposition \ref{reduction}, we
know that $X$ is the blow up at just one point of a minimal surface
$Y$ of zero Kodaira dimension. Moreover, by Lemma \ref{abelian} we
know that $Y$ is an Abelian surface. Then, Lemma \ref{final} shows
that $Y$ has to be the product of two identical elliptic curves
associated to the lattice $\Lambda_{(1, \tau)}$, where
$\tau=e^{\frac{\pi i}{3}}$. We can then appeal to the results
contained in \cite{Holzapfel} to conclude that the pair $(X, D)$ is
indeed the compactification of a cusped complex hyperbolic surface.
Alternatively, one can observe that, by construction, the pair $(X,
D)$ given in Lemma \ref{final} saturates the logarithmic
Bogomolov-Miyaoka-Yau inequality. Then, because of the analytical
results contained in \cite{TianY}, it must be the compactification
of a ball quotient.
\end{proof}

It is worth mentioning that the pair $(Y, C)$ given in Lemma
\ref{final} is the starting point for an interesting construction of
Hirzebruch, see \cite{Hirzebruch}. More precisely, the pair $(X, D)$
and other blow ups of $(Y, C)$ are used by Hirzebruch as bases for a
clever branched cover construction which produces an infinite
sequence of minimal surfaces of general type whose ratio
$c^{2}_{1}/c_{2}$ tends to three. In the same paper, Hirzebruch
conjectured many of the surfaces he constructed to be
compactifications of Picard modular surfaces. This is indeed the
case as shown by Holzapfel \cite{Holzapfel}. For the particular
surface given in Lemma \ref{final}, one can also refer to
\cite{Stover}. In conclusion, the proof of Theorem \ref{secondo}
stated in Section \ref{prelim} not only depends on Theorem
\ref{primo}, but crucially relies on the fact that the pair given in
Lemma \ref{final} was already known to be the compactification of a
Picard modular surface \cite{Holzapfel}, \cite{Stover}.

\end{document}